\newtheorem{theorem}{\qquad \bf Theorem}[section]
\newtheorem{lemma}[theorem]{\qquad\bf Lemma}
\newtheorem{assumption}[theorem]{\qquad \bf Assumption}
\newcommand{\be}{\begin{equation}}
\newcommand{\ee}{\end{equation}}
\newcommand{\ba}{\begin{array}}
\newcommand{\ea}{\end{array}}
\newcommand{\bea}{\begin{eqnarray}}
\newcommand{\eea}{\end{eqnarray}}
\begin{document}

\title{An interior point method for nonlinear optimization with a quasi-tangential subproblem}
%\thanks{The work was supported by Chinese NSF grant 60873116.}}
%\author{Songqiang Qiu, Zhongwen Chen}
\author{SONGQIANG \ QIU\thanks{School of Sciences, China University of Mining and Technology,
Xuzhou, 221116, PR China}\quad AND\ ZHONGWEN\ CHEN\thanks{School of Mathematics Science, Soochow
University, Suzhou, 215006, PR China (zwchen@suda.edu.cn).}}
\date{}
\maketitle

\begin{abstract}
In this paper, we proposed an interior point method for constrained optimization, which is characterized by the using of quasi-tangential subproblem. This algorithm follows the main ideas of primal dual interior point methods and Byrd-Omojokun's step decomposition strategy. The quasi-tangential subproblem is obtained by penalizing the null space constraint in the tangential subproblem. The resulted quasi-tangential step is not strictly lying in the null space of the gradients of constraints. We also use a line search trust-funnel-like strategy, instead of penalty function or filter technology, to globalize the method. Global convergence results were obtained under standard assumptions.
\end{abstract}

\textbf{keywords:}
Constrained optimization;
Interior point method; quasi-tangential subproblem; Trust-funnel-like method;
 Global convergence

\section{Introduction}
In this paper, we describe and analyze an interior point method for nonlinear constrained optimization
problem
\begin{equation}\label{a1}
\begin{split}
\min&\  f(x)\\
\textrm{s.t.}&\ c(x)=0,\\
&\ x\geq 0,
\end{split}
\end{equation}
where $f:R^n\rightarrow R$, $c:R^n\rightarrow R^m$ are smooth functions. Problems with general nonlinear inequality constraints can be
equivalently reformulated into the above form by using slack variables.

Interior point methods provide a class of tools for the treatment of inequality constraints. They have been intensively studied in the last three
decades; see \cite{ByrdHN1999,CurtiSW2010,El-BaTTZ1996,Gondz2012,UlbriUV2004} and etc.
Efficient implementations based on interior point methods for solving linear and nonlinear programming have emerged; see \cite{Vande1999,WaB2006} and etc.
The classical interior point strategy for \eqref{a1}
obtains a solution by approximately solving
a series of barrier problem of the form
\begin{equation}\label{b1}
\begin{split}
\min&\ \varphi^\mu(x)\stackrel{\textrm{def}}{=}f(x)-\mu\displaystyle\sum_{i=1}^n\ln x^{(i)}\\
\textrm{s.t.}&\ c(x)=0,
\end{split}
\end{equation}
with $\mu\searrow 0$. So the solution of the above equality constrained barrier subproblem that generates steps is of crucial importance in interior point methods.
For this solution, some algorithms use perturbed optimality conditions \cite{ForsgG1998,YamasY1996} while  some involve SQP (trust region) mechanisms.
SQP (trust region) based interior point methods, see \cite{ByrdHN1999,ByrdGN2000,CurtiSW2010,CurtiHSW2012,NocedW2014} and etc, have been proven to be robust and efficient. Of all the methods of this kind, the step-decomposition approaches,  which splits the total step into a normal
step and a tangential step, and which integrates ideas of interior point methods and
Byrd-Omojokun's trust region idea \cite{Byrd1987,Omojo1989}, have been proven practical. Two nice features of this strategy is the consistent subproblems and the capacity of infeasibility detection \cite{NocedW2014}.

Our approach follows this algorithmic frame with a major character that it employes a quasi-tangential subproblem which generates a step not strictly lying in the null space of $\nabla c_k^T$.
The use of quasi-tangential subproblems in Byrd-Omojokun-like methods is not entirely new. Some methods with inexact step computation, in interior points or trust region (SQP) scheme, adopt similar ideas. Curtis et al \cite{CurtiSW2010} used an inexact Newton technique in their interior point methods where an inexact tangential step
is generated from an inexact Newton equation for the tangential subproblem. Heinkenschloss and Ridzal \cite{raey} obtained inexact tangential steps by computing approximate projections of vectors onto the null space of the constraints Jacobian. Gould and Toint \cite{GouldT2010} specified conditions that
 an (inexact) tangential step should satisfy but did not give computational details. In our method, the key point of the quasi-tangential subproblem is to convert the tangential subproblem into an unconstrained
quadratic programming by penalizing the null space constraints. An inexact tangential step satisfying some necessary conditions is obtained if choosing sufficiently small penalty factor. This strategy circumvents the computation of a base for the null space, which is quite important for the solution
of tangential subproblem in Byrd-Omojokun-like algorithms.

Another character
of our algorithm is that we use a trust-funnel-like strategy to balance the improvements on feasibility and optimality. Trust funnel method was introduced by
Gould and Toint in \cite{GouldT2010}. And similar ideas can be found in \cite{BielsG2008,QiuC2012,QiuC2012a,ShenXP2010,XueSP2009} and etc. Our balance idea mainly
differs from these algorithms in the switch conditions between so called $f-$iteration and $h-$iteration.

The balance of this paper is organized as follows. In the next section, we describe the design of the algorithm in detail. In Section 3, we show that the
proposed algorithm is well-defined while the global convergence is shown in Section 4. In Section 5, we report preliminary numerical results. Finally, some
further remark is given in Section 6.

\emph{Notations:} We use $\|\cdot\|$ to denote the Euclidean norm $\|\cdot\|_2$. Subscript $k$ refers to iteration indices and superscript $(i)$
is the $i$th component of a vector.

\section{Algorithm Description}

We motivate the main algorithm in this section. Our method employs Armijo line search to get global convergence. Yet, we are partially motivated by trust region methods for \eqref{b1}.
So we give a brief overview of this kind of methods, in a primal version.
The sequential quadratic programming approach can be applied to (approximately) solving the barrier problem \eqref{b1}. At an iterate $x$, a step $d$ is a
displacement generated by solving
\begin{equation}\label{b2}
\begin{split}
\min&\ \nabla \varphi^\mu(x)^Td+\frac12d^TWd\\
\textrm{s.t.}&\ c(x)+\nabla c(x)^Td=0,
\end{split}
 \end{equation}
where $W=H+\mu X^{-2}$ with $H$ the Hessian of the Lagrangian of problem \eqref{b1} and $X=\textrm{diag} (x)$. A trust region constraint \begin{equation}\label{tr}\|d\|\leq\Delta\end{equation} is always introduced in \eqref{b2} so as to obtain global convergence and to allow for the case where
$W$ is not positive definite on the null space of $\nabla c(x)^T$.
 It is well known \cite{Vardi1985} that \eqref{pdqp} with \eqref{tr} can be
inconsistent when the trust region $\Delta$ is so small that even the shortest step $d$ that satisfies the constraints in \eqref{pdqp}
is excluded by the trust region.
One of the strategies to make the constraints consistent is the step decomposition method of Byrd \cite{Byrd1987}
and Omojokum \cite{Omojo1989} in which the total step of the algorithm is split into normal and tangential steps.
The normal step $v$ is a move toward the satisfaction of the the constraints of \eqref{pdqp}, and is defined as the solution
of the normal subproblem
\begin{equation}\label{b3}
\begin{split}
\min&\ \frac12\|c(x)+\nabla c(x)^Tv\|^2\\
\textrm{s.t.}&\ \|v\|\leq \xi\Delta,
\end{split}
\end{equation}
with $\xi\in(0,1)$.
The tangential step $t$ aims to reduce $\varphi^\mu(x)$ on the null space of $\nabla c(x)^T$, and is generated by solving the tangential subproblem
\begin{equation}\label{b4}
\begin{split}
\min_t &\ (\nabla \varphi^\mu(x)+Wv)^Tt+\frac12t^TWt\\
\textrm{s.t.}&\ \nabla c(x)^Tt=0,\\
& \|v+t\|\leq\Delta.
\end{split}
\end{equation}
Following the above ideas, however, in this paper, we wish to present an alternative to the null space constraint in tangential subproblem \eqref{b4}. This and the well-know equivalence between trust region methods and Levenberg-Marquardt methods provide the main motivations for our algorithm.

\subsection{The Primal-Dual Barrier Method}

The Karush-Kuhn-Tucker (KKT) conditions of the barrier problem \eqref{b1} cause the following nonlinear system
\begin{equation}\label{pkkt}
\begin{pmatrix}
\nabla f(x)+\nabla c(x)\lambda-z\\
-\mu X^{-1}e+z\\
c(x)
\end{pmatrix}=0
\end{equation}
with $\lambda\in R^m$, $0\leq z\in R^n$ the Lagrangian multipliers.
Multiplying the second row of \eqref{pkkt} by $X$, we obtain the system
\begin{equation}\label{pdkkt}
\begin{pmatrix}
\nabla f(x)+\nabla c(x)\lambda-z\\
Xz-\mu e\\
c(x)
\end{pmatrix}=0.
\end{equation}
This may be viewed as a perturbed KKT system for the original problem \eqref{a1}. The optimality error for the barrier problem here is defined, based on \eqref{pdkkt} as \cite{WaB2006}
\begin{equation*}
E_{\mu}(x,\lambda,z);=\left\{\frac{\|\nabla f(x)+\nabla c(x)\lambda-z\|}{s_d},\frac{\|Xz-\mu e\|}{s_c},\|c(x)\|\right\}
\end{equation*}
with scaling parameters $s_d,s_c\geq1$ defined as
$$
s_d=\max\left\{s_{\max},\frac{\|\lambda\|_1+\|z\|_1}{m+n}\right\}/s_{\max},\ \ s_c=\max\left\{s_{\max},\frac{\|z\|_1}{n}\right\}/s_{\max},
$$
where $s_{\max}>1$ fixed. Correspondingly, we use $E_0(x,\lambda,z)$ to measure the optimality error for the original problem \eqref{a1}. A
typical algorithmic framework of barrier methods for \eqref{a1} is as follows.

\framebox{\parbox{0.9\textwidth}{

A{\scriptsize LGORITHM} 1:  O{\scriptsize UT} L{\scriptsize OOP}
\begin{enumerate}[\bf Step\ 1]\setcounter{enumi}{-1}
\item Choose an initial value for the barrier parameter $\mu_0>0$, and select the parameter $\kappa_\epsilon>0$, and the stop tolerance $\epsilon_{tol}$.
Choose the starting point $x_0$, $\lambda_0$, $z_0$. Set $j:=0$.

\item If $E_0(x_j,\lambda_j,z_j)\leq\epsilon_{tol}$, stop.

\item Apply an SQP method, starting from $x_j$, to find an approximate solution $x_{j+1}$ for the barrier problem, with Lagrange multipliers
$\lambda_{j+1},$ $z_{j+1}$ satisfying $E_{\mu_j}(x_{j+1},\lambda_{j+1},z_{j+1})\leq\kappa_\epsilon\mu_j$.

\item Choose $\mu_{j+1}\in(0,\mu_j)$, set $j:=j+1$ and go to Step 1.
\end{enumerate}
}}

~

\emph{Remark:} To achieve fast local convergence algorithm, the barrier parameter $\mu$ needs to be updated carefully \cite{El-BaTTZ1996,YamasY1996}.
We will follow the approach suggested by Byrd, Liu and Nocedal \cite{byrd1997local} and will specify the details in Section 5.

The primary work of Algorithm 1 lies clearly in Step 2, where an approximate solution of \eqref{b1} is found.
Primal-dual interior point methods apply Newton's method to the perturbed KKT systems
\eqref{pdkkt} and modify step lengths so that the inequalities $(x,z)\geq 0$ are satisfied strictly. A primal-dual linear system
is given as
\begin{equation*}
\begin{pmatrix}
H&\nabla c(x)& -I\\
Z&0&X\\
\nabla c(x)^T&0&0
\end{pmatrix}=\begin{pmatrix}
d_x\\ d_{\lambda}\\ d_z
\end{pmatrix}=
-\begin{pmatrix}
\nabla f(x)+\nabla c(x)\lambda-z\\
Xz-\mu e\\
c(x)
\end{pmatrix},
\end{equation*}
 where we have defined
$
Z=\textrm{diag}(z_1,\cdots,z_n)$. Eliminating $d_z$ by
\begin{equation}\label{dz}
d_z=-z+\mu X^{-1}e-X^{-1}Zd_x,
\end{equation}
yields the iteration
\begin{equation*}
\begin{pmatrix}
H+X^{-1}Z&\nabla c(x)\\
\nabla c(x)^T&0
\end{pmatrix}\begin{pmatrix}d_x\\ \lambda^+
\end{pmatrix}=-\begin{pmatrix}
\nabla \varphi(x)\\ c(x)\end{pmatrix}.
\end{equation*}
It is easy to see that the step generated by this system coincides with the solution of the following primal-dual QP subproblem
\begin{equation}\label{pdqp}
\begin{split}
\min&\ \nabla \varphi^\mu(x)^Td+\frac12d^T\tilde Wd\\
\textrm{s.t.}&\ c(x)+\nabla c(x)^Td=0,
\end{split}
\end{equation}
where $\tilde W=H+X^{-1}Z.$ Steps computation of our algorithm is based on this mode.

\subsection{Trust-Funnel-Like Approach For The Barrier Problem}

We use a trust-funnel-like \cite{GouldT2010,QiuC2012} line search algorithm for the approximate solution of the barrier problem \eqref{b1} with $\mu=\mu_j$.
Given the iterate $x_k$ and the corresponding Lagrange multiplier $\lambda_k$, $z_k$, a trust-funnel-like method for constrained optimization pursuits the solution iterately in a progressively stricter trust-funnel defined by
 \begin{equation*}
 h(x)\leq h_k^{\max},
 \end{equation*}
 where
 $$
 h(x)=\|c(x)\|
 $$
 and $h_k^{\max}$ is a nonincreasing limit on infeasibility.

Note that \eqref{b3} is an trust region model for the nonlinear least square problem
\begin{equation*}
\min\ h(x)^2.
\end{equation*}
As a classical but still popular method for this problem, Levenberg-Marquardt method \cite{Leven1944,Marqu1963,Mor1978} computes a search direction
by solving the following linear system
\begin{equation}\label{b8}
(\nabla c_k\nabla c_k^T+\eta_k I)v=-\nabla c_kc_k,
\end{equation}
where $\eta_k$ is a positive parameter. Sometimes it is considered to be
the prognitor of the trust region approach for general unconstrained optimization \cite{WrighN1999}. The following lemma from Nocedal and Wright \cite{WrighN1999} shows connection between the solutions of \eqref{b3} and \eqref{b8}.
\begin{lemma}
The vector $v_k^{\textrm{LM}}$ is a solution of the trust region subproblem \eqref{b3} if and only if $v_k^{\textrm{LM}}$ is feasible and
there is a scalar $\rho>0$ such that
\begin{eqnarray*}
&(\nabla c_k\nabla c_k^T+\eta_k I)v_k^{\textrm{LM}}=-\nabla c_kc_k,\\
&\eta_k(\Delta-\|v_k^{\textrm{LM}}\|)=0.
\end{eqnarray*}
\end{lemma}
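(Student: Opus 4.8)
The plan is to recognize the least-squares trust region subproblem \eqref{b3} as a special instance of the general quadratic trust region subproblem and then to invoke (and verify) the classical Mor\'e--Sorensen characterization of its global solution. First I would expand the objective of \eqref{b3},
\[
\phi(v)\stackrel{\textrm{def}}{=}\tfrac12\|c_k+\nabla c_k^Tv\|^2=\tfrac12 c_k^Tc_k+(\nabla c_kc_k)^Tv+\tfrac12 v^T(\nabla c_k\nabla c_k^T)v,
\]
so that, writing $g=\nabla c_kc_k$ and $B=\nabla c_k\nabla c_k^T$, the subproblem takes the standard form $\min\ g^Tv+\tfrac12 v^TBv$ subject to $\|v\|\le\Delta$. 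The crucial structural feature here is that $B=\nabla c_k\nabla c_k^T$ is positive semidefinite, hence $B+\eta I\succeq 0$ for every $\eta\ge 0$. This is exactly what removes the so-called hard case from the general theory and makes the two stated conditions both necessary and sufficient with a multiplier $\eta_k\ge 0$.

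For sufficiency ($\Leftarrow$), suppose $v_k^{\textrm{LM}}$ is feasible and satisfies the two displayed equations for some $\eta_k\ge 0$. The first equation says that $v_k^{\textrm{LM}}$ is a stationary point of the shifted quadratic $\hat\phi(v)=g^Tv+\tfrac12 v^T(B+\eta_k I)v$; since $B+\eta_k I\succeq 0$, the function $\hat\phi$ is convex and $v_k^{\textrm{LM}}$ is therefore its global minimizer. Rewriting in terms of $\phi$ via $\hat\phi(v)=\phi(v)-\tfrac12 c_k^Tc_k+\tfrac{\eta_k}{2}\|v\|^2$, the inequality $\hat\phi(v)\ge\hat\phi(v_k^{\textrm{LM}})$ becomes, for every feasible $v$,
\[
\phi(v)-\phi(v_k^{\textrm{LM}})\ge \tfrac{\eta_k}{2}\bigl(\|v_k^{\textrm{LM}}\|^2-\|v\|^2\bigr).
\]
The complementarity condition $\eta_k(\Delta-\|v_k^{\textrm{LM}}\|)=0$ then closes the argument: if $\eta_k=0$ the right-hand side vanishes, while if $\eta_k>0$ then $\|v_k^{\textrm{LM}}\|=\Delta\ge\|v\|$, so the right-hand side is nonnegative. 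In either case $v_k^{\textrm{LM}}$ globally minimizes \eqref{b3}.

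For necessity ($\Rightarrow$), I would split on activity of the trust region constraint. If $\|v_k^{\textrm{LM}}\|<\Delta$, then $v_k^{\textrm{LM}}$ is an interior minimizer, so $\nabla\phi(v_k^{\textrm{LM}})=Bv_k^{\textrm{LM}}+g=0$, which is the first equation with $\eta_k=0$, and complementarity holds trivially. If $\|v_k^{\textrm{LM}}\|=\Delta$, I write the constraint in the smooth form $\tfrac12\|v\|^2\le\tfrac12\Delta^2$; its gradient $v_k^{\textrm{LM}}$ is nonzero on the boundary (assuming $\Delta>0$), so LICQ holds and the first-order KKT conditions produce a multiplier $\eta_k\ge 0$ with $Bv_k^{\textrm{LM}}+g+\eta_k v_k^{\textrm{LM}}=0$, i.e.\ the first equation, while complementarity holds because the constraint is active.

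I do not anticipate any serious obstacle, precisely because the positive semidefiniteness of $\nabla c_k\nabla c_k^T$ trivializes the delicate part of the general trust region theory, namely verifying that $B+\eta_k I$ is positive semidefinite and handling the hard case in which $g$ lies orthogonal to the eigenspace of the least eigenvalue of $B$. The only minor points to watch are the degenerate radius $\Delta=0$, for which the feasible set is the singleton $\{0\}$ and the claim is vacuous, and the precise sign of the multiplier, which should be stated as $\eta_k\ge 0$ rather than the strict $\rho>0$ appearing in the surrounding text (the value $\eta_k=0$ occurring exactly when the trust region constraint is inactive).
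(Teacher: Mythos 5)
Your proof is correct: the paper gives no proof of this lemma at all (it is quoted from Nocedal and Wright \cite{WrighN1999}), and your argument is precisely the standard characterization of global trust-region solutions specialized to $B=\nabla c_k\nabla c_k^T\succeq 0$, for which the usually delicate third condition of the general theorem --- positive semidefiniteness of $B+\eta_k I$ --- is automatic, exactly as you observe. You are also right to flag that the multiplier should satisfy $\eta_k\ge 0$ rather than the ``$\rho>0$'' of the statement (and, similarly, the complementarity condition should involve the actual radius $\xi\Delta$ of \eqref{b3} rather than $\Delta$).
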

Levenberg-Marquardt method is globalized by a line search strategy, which is less costly in computation than trust region method. Furthermore, researchers
have shown that if $\eta_k=\|c_k\|^\delta$, $\delta\in(0,2]$,
fast local convergence can be achieved without nonsingularity assumption, see Yamashita and Fukushima\cite{yamashita2001rate}, Fan and Yuan \cite{FanY2005}, Zhang \cite{Zhang2003} and etc. These facts motivate us to compute the normal step by the following hybrid method. Hereby, the normal search direction is defined as
\begin{equation}\label{b9}
v_k=\begin{cases}\arg\min\ \|c_k+\nabla c_k^Tv\|^2+\|c_k\|^{\delta}\|v\|^2,& \textrm{if }\nabla c_k \textrm{ is rank deficient}, \\
\arg\min\|c_k+\nabla c_k^Tv\|^2,& \textrm{otherwise},\end{cases}
\end{equation}
where $\delta\in(0,2)$ is a fixed constant.

%The tangential constraint $\nabla c(x)^Tt=0$ prevents $t$ from jeopardizing the infeasibility reduction that normal step $v_k$ just obtains.
%To achieve similar goal, this constraint can be relaxed as
%\begin{equation}\label{b10}
%\|\nabla c(x)^Tt\|\leq\eta
%\end{equation}
%for some positive scalar $\eta$. Similar ideas have been proposed by
% Gould and Toint in \cite{GouldT2010}. Like their ideas, such relaxation aims to trade some decrease in linearized
%feasibility for a large improvement in objective value over a reasonable step. At first glance, such a modification increase the difficulty of solving
%tangential subproblem. But this side effect can be easily circumvented by adding an item $\frac1{2\nu}t^T\nabla c(x)\nabla c(x)^Tt$ to the quadratic
%objective function where $\nu>0$. Hereby, we introduce our inexact tangential subproblem
%\begin{equation}\label{b11}
%\min_t\ (\nabla \varphi^\mu(x)+Ws)^Tt+\frac12t^T(W+\frac1{\nu}\nabla c(x)\nabla c(x)^T+\zeta I)t,
%\end{equation}
%where $\zeta>0$ is a scalar such that $(W+\frac1{\nu}\nabla c(x)\nabla c(x)^T+\zeta I)$ is positive definite.
The effect of the tangential constraint $\nabla c(x)^Tt$ is to prevent $t$
from jeopardizing the infeasibility reduction that normal step $v_k$ just obtains. It has been observed that this effect can be achieve by only requiring
\begin{equation}\label{b10}
\|\nabla c_k^Tt\|\leq\xi_k
\end{equation}
for an appropriate positive scalar $\xi_k$ \cite{GouldT2010}. On the other hand, it is quite likely that the concession on linearized feasibility that \eqref{b10} makes
can lead to some considerable improvement in objective value over a reasonable step. However, replacing $\nabla c_k^Tt=0$ by \eqref{b10} will undoubtedly
increase the difficulty of solving tangential subproblem. But this difficulty can be easily circumvented by adding the item $\frac1{2\nu_k}t^T\nabla c_k\nabla c_k^Tt$ to the quadratic
objective function where $\nu_k>0$ acts as a penalty factor. Hereby, we get our quasi-tangential subproblem
\begin{equation}\label{b11}
\min_t\ (\nabla \varphi^\mu_k+\tilde W_kv_k)^Tt+\frac12t^T\left(\tilde W_k+\frac1{\nu_k}\nabla c_k\nabla c_k^T+\zeta_k I\right)t.
\end{equation}
where $\zeta_k\geq0$ is a scalar which is chosen such that $(\tilde W_k+\frac1{\nu_k}\nabla c_k\nabla c_k^T+\zeta_k I)$ is positive definite.
%\begin{equation}\label{pdef}
%d^T(\tilde W_k+\frac1{\nu_k}\nabla c_k\nabla c_k^T+\zeta_k I)d\geq b_1\|d\|^2
%\end{equation}
%for any $d\in r^n$ with a fixed $b_1>0$.

Now we specify the choices of $\nu_k$. Firstly, the parameter $\nu_k$ should be a positive scalar that ensures the positive
 definiteness of the symmetric matrix $\tilde W_k+\frac1{\nu_k}\nabla c_k\nabla c_k^T$. Secondly, $\nu_k$ should be small enough
  such that the corresponding solution $t_k$ of \eqref{b11} satisfies
either \begin{eqnarray}
&-(\nabla\varphi^{\mu_j}_k)^T(v_k+t_k)\geq \sigma_1h_k^{\sigma_2},\label{b16}\ \textrm{and}\\
&\|\nabla c_k^Tt_k\|\leq\kappa_1(h_k^{\max}-\|c_k+\nabla c_k^Tv_k\|),\label{b17}
\end{eqnarray}
where $\sigma_1$, $\sigma_2$ are positive constants and $\kappa_1\in(0,1)$,
 or
\begin{eqnarray}
&-(\nabla\varphi^{\mu_j}_k)^T(v_k+t_k)< \sigma_1h_k^{\sigma_2},\label{b18}\ \textrm{and}\\
&\|\nabla c_k^Tt_k\|\leq\kappa_2(h_k-\|c_k+\nabla c_k^Tv_k\|),\label{b19}
\end{eqnarray}
where $\kappa_2\in(0,1)$. The positive definiteness of $\tilde W_k+\frac1{\nu_k}\nabla c_k\nabla c_k^T$ is guaranteed for a small enough
 parameter $\nu_k$ if $H_k$
is positive definite on the null space of $\nabla c_k^T$.

\begin{lemma}\label{thmnu}
Suppose that $H_k$ is positive definite on the null space of $\nabla c(x)$, and that $\nabla c(x)$ has full rank. Then there is a
threshold value $\bar \nu$ such that for all $\nu\in(0,\bar\nu]$, $\tilde W_k+\frac1{\nu_k}\nabla c_k\nabla c_k^T$ is positive definite.
\end{lemma}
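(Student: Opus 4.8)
The plan is to show that the matrix $M_k(\nu):=\tilde W_k+\frac{1}{\nu}\nabla c_k\nabla c_k^T$ is positive definite once $\nu$ is small enough, and then to recover the full interval statement from a monotonicity observation. For $0<\nu<\bar\nu$ we may write
\[
M_k(\nu)=M_k(\bar\nu)+\Bigl(\tfrac1\nu-\tfrac1{\bar\nu}\Bigr)\nabla c_k\nabla c_k^T,
\]
and the second summand is positive semidefinite. Hence it suffices to exhibit a single $\bar\nu>0$ with $M_k(\bar\nu)\succ0$; positive definiteness for every $\nu\in(0,\bar\nu]$ then follows automatically, since $M_k(\nu)\succeq M_k(\bar\nu)\succ0$.

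To produce such a $\bar\nu$, I would split $R^n$ orthogonally as $\mathrm{Range}(\nabla c_k)\oplus\mathcal N$, where $\mathcal N=\mathrm{Null}(\nabla c_k^T)$ is the null space on which $H_k$ (and hence $\tilde W_k=H_k+X^{-1}Z$, as $X^{-1}Z\succeq0$) is positive definite. Let $Y$ and $Z$ have columns forming orthonormal bases of $\mathrm{Range}(\nabla c_k)$ and $\mathcal N$ respectively, so that $[Y\ Z]$ is orthogonal, $\nabla c_k^T Z=0$, and any $x$ decomposes as $x=Yp+Zq$ with $\|x\|^2=\|p\|^2+\|q\|^2$. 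Because $\nabla c_k^T x=\nabla c_k^T Y p$, the quadratic form becomes
\[
x^T M_k(\nu)x=x^T\tilde W_k x+\frac{1}{\nu}\|\nabla c_k^T Y p\|^2 .
\]
Two estimates drive the argument. First, the full-rank hypothesis makes $Y^T\nabla c_k\nabla c_k^T Y$ nonsingular, so $\|\nabla c_k^T Y p\|^2\ge c_1\|p\|^2$ for some $c_1>0$; this is exactly where full rank enters, and it is what allows the penalty term to dominate the range component. Second, positive definiteness of $\tilde W_k$ on $\mathcal N$ gives $q^T Z^T\tilde W_k Z q\ge\mu_0\|q\|^2$ for some $\mu_0>0$.

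Expanding $x^T\tilde W_k x$ into its $pp$-, $pq$- and $qq$-blocks, bounding the $pp$-block below by $-\|\tilde W_k\|\,\|p\|^2$ and splitting the cross term by Young's inequality as $2|p^T Y^T\tilde W_k Z q|\le\tfrac{\mu_0}{2}\|q\|^2+C\|p\|^2$, I arrive at
\[
x^T M_k(\nu)x\ \ge\ \frac{\mu_0}{2}\|q\|^2+\Bigl(\frac{c_1}{\nu}-\|\tilde W_k\|-C\Bigr)\|p\|^2 .
\]
Choosing $\bar\nu$ so small that $c_1/\bar\nu>\|\tilde W_k\|+C$ makes both coefficients positive, whence $x^T M_k(\nu)x>0$ for all $x\ne0$ and all $\nu\in(0,\bar\nu]$. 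The main obstacle is handling the indefinite cross term between the range and null-space components, since $\tilde W_k$ need not be positive definite on all of $R^n$; the argument works only because the growing penalty $\tfrac1\nu\|\nabla c_k^T Y p\|^2$ pins down the entire $p$-component, after which the cross term is absorbed into the two definite blocks. A shorter but less explicit alternative is a compactness argument: were no threshold to exist, unit vectors $x_j$ with $x_j^T M_k(\nu_j)x_j\le0$ for $\nu_j\downarrow0$ would satisfy $\|\nabla c_k^T x_j\|^2\le\nu_j\|\tilde W_k\|$, forcing any limit point $x^*$ into $\mathcal N$ with $\|x^*\|=1$ while yielding $(x^*)^T\tilde W_k x^*\le0$, which contradicts positive definiteness on $\mathcal N$.
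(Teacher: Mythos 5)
Your proof is correct and follows essentially the same route as the paper's: both split $R^n$ into $\mathrm{Null}(\nabla c_k^T)$ and $\mathrm{Range}(\nabla c_k)$, use positive definiteness of $H_k$ on the null space together with full rank of $\nabla c_k$ so that the $\frac1\nu$ penalty dominates the range component, and absorb the indefinite cross term (you via Young's inequality, the paper by completing the square, following Theorem 17.5 of Nocedal--Wright). Your opening monotonicity observation $M_k(\nu)\succeq M_k(\bar\nu)$ for $\nu\le\bar\nu$ is a clean way to reduce to a single threshold, though the paper's final bound already covers all $\nu\in(0,\bar\nu]$ directly.
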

\begin{proof}
Since $X_k^{-1}Z_k$ is strictly positive definite, it is sufficient to show the positive definiteness of $H_k+\frac1{\nu_k}\nabla c_k\nabla c_k^T$. For and $d\in R^n$, we can partition it into components in $\textrm{Null}(\nabla c_k^T)$ and $\textrm{Range}(\nabla c_k)$, and write
$$
d=t+\nabla c_kw
$$
with $t\in\textrm{Null}(\nabla c_k^T)$ and $w\in R^m$. Then we have that
\begin{equation}\label{c3}
\begin{split}
&d^T\left(H_k+\frac1\nu\nabla c_k\nabla c_k^T\right)d=(t+\nabla c_kw)^T\left(H_k+\frac1\nu\nabla c_k\nabla c_k^T\right)(t+\nabla c_kw)\\
=&t^T\left(H_k+\frac1\nu\nabla c_k\nabla c_k^T\right)t+2w^T\nabla c_k^T\left(H_k+\frac1\nu\nabla c_k\nabla c_k^T\right)t
+w^T\nabla c_k^T\left(H_k+\frac1\nu\nabla c_k\nabla c_k^T\right)\nabla c_kw\\
=&t^TH_kt+2w^T\nabla c_k^TH_kt+\frac2\nu w^T\nabla c_k\nabla c_k^T\nabla c_k^Tt+
w^T\nabla c_k^TH_k\nabla c_kw+\frac1\nu w^T\nabla c_k^T\nabla c_k\nabla c_k^T\nabla c_k w.
\end{split}\end{equation}
By similar arguments as the proof of \cite[Theorem 17.5]{WrighN1999}, there are non-negative scalars $a_1$, $a_2$, $a_3$, $a_4$
such that
\begin{eqnarray*}
&&t^TH_kt\geq a_1\|t\|^2,\ \textrm{for all } t\in\textrm{Null}\nabla c_k,\\
&&2w^T\nabla c_k^TH_kt\geq-a_2\|w\|\|t\|,\\
&&w^T\nabla c_k^TH_k\nabla c_kw\geq-a_3\|w\|^2,\\
&&\frac1\nu w^T\nabla c_k^T\nabla c_k\nabla c_k^T\nabla c_k w\geq\dfrac{a_4^2}\nu\|w\|^2
\end{eqnarray*}
By substituting these lower bounds into \eqref{c3}, we have
\begin{equation*}
\begin{split}
d^T\left(H_k+\frac1\nu\nabla c_k\nabla c_k^T\right)d\geq &a_1\|t\|^2-2a_2\|w\|\|t\|+\left(\dfrac{a_4^2}\nu-a_3\right)\|w\|^2\\
\geq&a_1\left[\|t\|-\frac{a_2}{a_1}\|w\|\right]^2+\left(\dfrac{a_4^2}{\nu}-c-\dfrac{a_2^2}{a_1}\right)\|w\|^2.
\end{split}\end{equation*}
Since $a_4>0$ by the full rank of $\nabla c_k$, $H_k+\frac1\nu\nabla c_k\nabla c_k^T$ is positive definite provided that we choose
$\bar\nu$ to be any value such that
$$
\dfrac{a_4^2}{\bar\nu}-c-\dfrac{a_2^2}{a_1}>0
$$
and choose $\nu\in(0,\bar\nu]$.
\end{proof}
However, if the condition of Theorem \ref{thmnu} is not satisfied, $\tilde W_k+\frac1{\nu_k}\nabla c_k\nabla c_k^T$ may be not
positive definite even if $\nu_k$ becomes extremely small. To prevent the algorithm from infinitely reducing $\nu_k$, we set a threshold $\nu_k^{\min}$ for $\nu_k$.
Define
$$
\nu_k^{\min}:=\begin{cases}\min\left\{\nu_{\min},\frac{\kappa_\nu\min\{\kappa_1(h_k^{\max}-\|c_k+\nabla c_k^Tv_k\|),\kappa_2(h_k-\|c_k+\nabla c_k^Tv_k\|)\}}{\min\left\{M_\nu,(\|\nabla\varphi_k^\mu+\tilde W_kv_k\|^2+1)\left(1+\frac{2\nu_0}{b_1}\|\tilde W_k+\zeta_k I\|\right)\right\}}\right\},&
\textrm{if } v_k>0,\\
\min\left\{\nu_{\min},\frac{\kappa_\nu\kappa_1h_k}{\min\left\{M_\nu,(\|\nabla\varphi_k^\mu+\tilde W_kv_k\|^2+1)\left(1+\frac{2\nu_0}{b_1}\|\tilde W_k+\zeta_k I\|\right)\right\}}\right\},&
\textrm{otherwise},
\end{cases}$$
where $\nu_{\min}\in(0,1)$ is close to 0, $\kappa_\nu$ is a positive scalar and $M_\nu$ is a large positive constant. If $\nu_k$ becomes smaller than $\nu_k^{\min}$ while
$\tilde W_k+\frac1{\nu_k}\nabla c_k\nabla c_k^T$ is still not positive definite, then set $\zeta_k$ to be a positive scalar such that
\begin{equation}\label{pdef}
d^T(\tilde W_k+\frac1{\nu_k}\nabla c_k\nabla c_k^T+\zeta_k I)d\geq b_1\|d\|^2
\end{equation}
for any $d\in R^n$ with $b_1>0$. Otherwise, just set $\zeta_k=0.$

Suppose that the current iterate is $x_k$, we describe algorithm for updating $\nu_k$ in Algorithm 2.

\framebox{\parbox{0.9\textwidth}{
A{\scriptsize LOGRITHM} 2: {U{\scriptsize PDATING} $\nu_k$}
\begin{enumerate}[\bf Step\ 1]\setcounter{enumi}{-1}
\item Set $\nu_k:=\nu_{k-1}$ and $\zeta_k=0$.

\item If $\tilde W_k+\frac1{\nu_k}\nabla c_k\nabla c_k^T+\zeta_k I$ is positive definite, go to Step 3.

\item If $\nu_k<\nu_k^{\min}$, set $\zeta_k$ to be a scalar satisfying \eqref{pdef}. Otherwise, go to Step 5.

\item Compute $t_k$.

\item If $t_k$ satisfies \eqref{b16} and \eqref{b17}, or if $t_k$ satisfies \eqref{b18} and \eqref{b19}, stop.

\item Set $\nu_k:=\nu_k/2$, go to Step 1.
\end{enumerate}
}}

~

Now we have the search direction $d_k=v_k+t_k$. The multiplier vector $\lambda$ corresponding to the next iterate is estimated by
\begin{equation}\label{elam}
\lambda_{k+1}=\frac1{\nu_k}\nabla c_k^Tt_k.
\end{equation}
The line search along $d_k$ is performed by first determining the maximal step length $\alpha_k^{\max}$ satisfying the
fraction-to-the-boundary rule
\begin{equation}\label{b12}
x_k+\alpha_k^{\max}d_k\geq(1-\tau_j)x_k,
\end{equation}  where $\tau_j\in(0,1)$ is a parameter close to 1 with respect to the iteration $j$ of the out loop.

If \eqref{b16} holds, then we call
the $k$th iteration an $f-$iteration and $x_k$ an $f-$iterate because it is quite reasonable to expect considerable reduction on objective function in
this case. Hence, we
require the step length $\alpha\in(0,\alpha_k^{\max}]$ to satisfy
 \begin{equation}\label{fred}
 \varphi^{\mu_j}_k-\varphi^{\mu_j}(x_k+\alpha d_k)\geq-\rho\alpha(\nabla \varphi^{{\mu_j}}_k)^Td_k.
 \end{equation}
The requirement for the feasibility on the new iterate is relatively rough. We require
 \begin{equation}\label{hmax}
 h(x_k+\alpha d_k)\leq h_k^{\max}.
 \end{equation}
 In the case where \eqref{b16} fails, which indicates that the infeasibility is significant while
sufficient reduction of objective function is not ensured, we call
the $k$th iteration an $h-$iteration and $x_k$ an $h-$iterate. We search $\alpha\in(0,\alpha_k^{\max}]$ satisfying
 \begin{equation}\label{hred}
 h(x_k+\alpha d_k)\leq (1-\rho)h_k+{\rho}\|c_k+\alpha\nabla c_k^Td_k\|
 \end{equation}
in this case.

 From \eqref{dz}, we obtain the estimation of the new dual variables
 \begin{equation}\label{ez}
 z_{k+1}={\mu_j} X_k^{-1}e-X_k^{-1}Z_kd_k.
 \end{equation}
 For the convergence proof, we require the ``primal-dual barrier term Hessian'' $X_k^{-1}Z_k$ do not deviate arbitrarily much from the
 ``primal Hessian'' $\mu_j X_k^{-2}$. To do this, we reset \cite{WaB2006}
 \begin{equation}\label{resetz}
 z_{k+1}^{(i)}:=\max\left\{\min\left\{z_{k+1}^{(i)},\frac{\kappa_{\sigma}\mu_j}{x_{k+1}^{(i)}}\right\},\frac{\mu_j}{\kappa_{\sigma}x_{k+1}^{(i)}}\right\}
 ,\ \ i=1,2,\cdots,n
 \end{equation}
 for some fixed $\kappa_\sigma>1$ after each step. Such safeguards not only benefit the convergence analysis but also work satisfcatory in practice \cite{ConnGOT2000,WaB2006,YamasYT2005}.

 After obtaining a new iterate, the limit on the new iterate is set as \cite{GouldT2010}
 \begin{equation}\label{hmaxnew}
 h_{k+1}^{\max}=\begin{cases}\begin{matrix}h_k^{\max}\, &\textrm{if } x_k \textrm{is an \it{f}-iterate},\\
 \max\{\kappa_hh_k^{\max},\bar\kappa_h h_k+(1-\bar\kappa_h) h_{k+1}\}, &\textrm{if } x_k \textrm{is an \it{h}-iterate}.\end{matrix}\end{cases}
 \end{equation}

 Now, we are ready to summarize all the details of this line search trust-funnel-like approach for the barrier problem. Suppose that the cuurent
 outer loop iteration is $j$ and that the parameter $\mu_j$, $\tau_j$ are available, and that the last iteration finished with the primal dual vector $(x_j,\lambda_j,z_j)$, where $(x_j,z_j)>0$. The detailed description is given in Algorithm 3.

\fbox{\parbox{0.9\textwidth}{
A{\scriptsize LGORITHM} 3: {I{\scriptsize NNER} L{\scriptsize OOP}}
\begin{enumerate}[\bf\ Step\ 1]\setcounter{enumi}{-1}
\item Choose $\delta\in(0,2]$, $\sigma_1,\sigma_2>0$, $\kappa_1,\kappa_2\in(0,1)$, $\rho\in 0,1$, $\kappa_\sigma>1$ and $\nu_0>0$.
Initialize the primal dual entry as
$(x_0,\lambda_0,z_0)=(x_j,\lambda_j,z_j).$ Let $$h_0^{\max}=\max\{h_0,\min(10,E_{\mu_j}(x_0,\lambda_0,z_0))\}.$$
Set $k:=0$.

\item If $E_{\mu_j}(x_k,\lambda_k,z_k)\leq\kappa_{\epsilon}\mu_j$, return.

\item Compute the normal step $v_k$ by solving \eqref{b9}. If $v_k=0$ and $h_k>0$, stop.

\item Use Algorithm 2 to update $\nu_k$ and compute the tangential step $t_k$.

\item Let $d_k=v_k+t_k$. Determine $\lambda_{k+1}$ by \eqref{elam}. Compute $z_{k+1}$ by \eqref{ez} and reset it by \eqref{resetz}.

\item Set $\alpha=\alpha_k^{\max}$ with $\alpha_k^{\max}$ defined by \eqref{b12}.

\item If \eqref{b16} holds, go to step 7. Otherwise, go to step 8.

\item \emph{f-iteration} \begin{enumerate}[Step\ 7.1]\setcounter{enumi}{0}
\item Set $x_k(\alpha)=x_k+\alpha d_k$.
\item If $x_k(\alpha)$ satisfies \eqref{fred} and \eqref{hmax}, go to Step 9.
\item Let  $\alpha=\alpha/2$ and go to step 7.1.
\end{enumerate}
\setcounter{enumi}{7}
\item \emph{h-iteration} \begin{enumerate}[Step\ 8.1]\setcounter{enumi}{0}
\item Set $x_k(\alpha)=x_k+\alpha d_k$.
\item If $x_k(\alpha)$ satisfies \eqref{hred}, go to Step 9.
\item Let  $\alpha=\alpha/2$ and go to step 8.1.
\end{enumerate}\setcounter{enumi}{8}

\item Set $\alpha_k=\alpha$, $x_{k+1}=x_k(\alpha_k)$. Compute $h_k^{\max}$ defined by \eqref{hmaxnew}.

 Set $k:=k+1$, go to step 1.
\end{enumerate}}}

~

\emph{Remark:} Algorithm 3 has chances to stop at Step 2, in which case the iterate $x_k$ fails to satisfy the linear
 independence constraint qualification and is a stationary point of $h(x)$,
meaning that $\nabla c_kc_k=0$.

\section{Well-definedness of Algorithm 3}

This section aims to give estimations on the parameter $\nu_k$ and the acceptable step length $\alpha_k$. The results established in this section not only
show the well-definedness of Algorithm 3 but also are essentially important for the global convergence analysis in the next section.
 We begin by recalling a result concerning a direct consequence of the
definition of $h_k^{\max}$.
\begin{lemma}\cite{GouldT2010}\label{lemhmax}The sequence $\{h_k^{\max}\}$ is non-increasing and the inequality
\begin{equation*}
0\leq h_l\leq h_k^{\max}
\end{equation*}
for all $l\geq k$.
\end{lemma}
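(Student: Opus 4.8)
\noindent The plan is to prove by induction on $k$ the invariant $h_k\le h_k^{\max}$, establishing along the way the monotonicity $h_{k+1}^{\max}\le h_k^{\max}$; the asserted bound $0\le h_l\le h_k^{\max}$ for $l\ge k$ then follows at once, since $h_l=\|c(x_l)\|\ge0$ by definition, while the invariant together with the monotone chain gives $h_l\le h_l^{\max}\le h_k^{\max}$. The base case is immediate from Step~0 of Algorithm~3, where $h_0^{\max}=\max\{h_0,\min(10,E_{\mu_j}(x_0,\lambda_0,z_0))\}\ge h_0$.

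For the inductive step I assume $h_k\le h_k^{\max}$ and split according to the two branches of \eqref{hmaxnew}. In an \emph{f}-iteration one has $h_{k+1}^{\max}=h_k^{\max}$, so monotonicity is trivial, and the acceptance test \eqref{hmax} gives $h_{k+1}\le h_k^{\max}=h_{k+1}^{\max}$ directly. The substantive case is an \emph{h}-iteration, and the heart of the argument is the estimate $h_{k+1}\le h_k$.

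To obtain it I would chain three facts. First, the normal step of \eqref{b9} never increases the linearized infeasibility: evaluating its (possibly regularized) least-squares objective at $v=0$ shows $\|c_k+\nabla c_k^Tv_k\|\le\|c_k\|=h_k$ in both branches. Second, an \emph{h}-iteration is precisely the case in which \eqref{b16} fails, so the tangential step satisfies \eqref{b19}; with $d_k=v_k+t_k$ and the triangle inequality this yields $\|c_k+\nabla c_k^Td_k\|\le(1-\kappa_2)\|c_k+\nabla c_k^Tv_k\|+\kappa_2 h_k\le h_k$. Third, writing $c_k+\alpha\nabla c_k^Td_k=(1-\alpha)c_k+\alpha(c_k+\nabla c_k^Td_k)$ for the accepted $\alpha\in(0,1]$ and using convexity of the norm gives $\|c_k+\alpha\nabla c_k^Td_k\|\le h_k$; substituting into the \emph{h}-iteration rule \eqref{hred} then produces $h_{k+1}\le(1-\rho)h_k+\rho h_k=h_k$.

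Finally I would close the induction. With $h_{k+1}\le h_k\le h_k^{\max}$, both arguments of the maximum in \eqref{hmaxnew} are at most $h_k^{\max}$ (the first because $\kappa_h\in(0,1)$, the second because $\bar\kappa_h h_k+(1-\bar\kappa_h)h_{k+1}$ is a convex combination of quantities bounded by $h_k^{\max}$), whence $h_{k+1}^{\max}\le h_k^{\max}$; and since $h_{k+1}^{\max}\ge\bar\kappa_h h_k+(1-\bar\kappa_h)h_{k+1}\ge h_{k+1}$ (using $h_k\ge h_{k+1}$), the invariant propagates as $h_{k+1}\le h_{k+1}^{\max}$. I expect the only delicate point to be the justification that the accepted step length satisfies $\alpha\in(0,1]$, so that the convex-combination split is legitimate; this should follow from the fraction-to-the-boundary rule \eqref{b12} together with the backtracking in Steps~7--8, and the remaining estimates are routine applications of the triangle inequality and the defining optimality of the subproblem solutions.
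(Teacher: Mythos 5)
Your proof is correct, but note that the paper itself gives no proof of this lemma: it is recalled verbatim from Gould and Toint \cite{GouldT2010} as ``a direct consequence of the definition of $h_k^{\max}$,'' so there is no in-paper argument to match. What you have supplied is a self-contained verification specialized to this algorithm, and the route you take is the right one: the induction on the invariant $h_k\le h_k^{\max}$, with the $f$-iteration case closed by the acceptance test \eqref{hmax} and the $h$-iteration case closed by the chain ``optimality of \eqref{b9} at $v=0$ $\Rightarrow$ $\|c_k+\nabla c_k^Tv_k\|\le h_k$, then \eqref{b19} and the triangle inequality $\Rightarrow$ $\|c_k+\nabla c_k^Td_k\|\le h_k$, then convexity of the norm and \eqref{hred} $\Rightarrow$ $h_{k+1}\le h_k$'' is exactly the computation the paper itself reuses later in Lemma \ref{lemhstep} (compare \eqref{hpred} and the display preceding \eqref{c5}). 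Your argument correctly exploits that an $h$-iteration is by definition one where \eqref{b16} fails, so Algorithm 2 must have terminated with \eqref{b18}--\eqref{b19}. The only caveats are bookkeeping ones that the paper leaves implicit rather than gaps in your logic: the constants $\kappa_h,\bar\kappa_h$ are never declared in Step 0 of Algorithm 3 but must lie in $(0,1)$ (as in \cite{GouldT2010}) for the maximum in \eqref{hmaxnew} to be dominated by $h_k^{\max}$, and the accepted step length must satisfy $\alpha\le 1$ for the convex-combination split, which holds under the standard convention that $\alpha_k^{\max}$ in \eqref{b12} is capped at $1$ — you flag both points appropriately.
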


Next we show that the parameter $\nu_k$ will admit the requirements if it becomes small enough.

\begin{lemma}
Suppose that $\nabla f_k$ and $\nabla c_k$ are Lipschitz continuous on an bounded open convex set $\Omega$ containing all the iterate generated by Algorithm 2,
that $\{B_k\}$ is bounded, and that the Algorithm 2 does not terminate at $x_k$. Then either a step $t_k$ satisfying \eqref{b16} and \eqref{b17}, or a step satisfying \eqref{b18} and \eqref{b19} will be found
if $\nu_k$ becomes small enough.
\end{lemma}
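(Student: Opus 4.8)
The plan is to study the unique minimizer $t_k$ of the unconstrained quadratic \eqref{b11} as the penalty factor $\nu_k$ is driven to zero, with the iterate $x_k$ (hence $\nabla\varphi^{\mu_j}_k$, $\tilde W_k$, $\nabla c_k$ and the normal step $v_k$) held fixed. Writing $g_k:=\nabla\varphi^{\mu_j}_k+\tilde W_kv_k$, $M_k:=\tilde W_k+\zeta_k I$ and $P_k:=M_k+\frac1{\nu_k}\nabla c_k\nabla c_k^T$, positive definiteness of $P_k$ (guaranteed either by Lemma \ref{thmnu} for small $\nu_k$ or by the choice of $\zeta_k$) makes the first-order condition for \eqref{b11} read $P_kt_k=-g_k$, so $t_k=-P_k^{-1}g_k$. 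The first step is a bound on $\|t_k\|$ that is uniform in $\nu_k$: when $\zeta_k$ is fixed through \eqref{pdef} we have $P_k\succeq b_1I$ and hence $\|t_k\|\le\|g_k\|/b_1$; when $\zeta_k=0$ an analogous bound follows from the coercivity of $\tilde W_k$ on $\mathrm{Null}(\nabla c_k^T)$ via the null-space/range splitting used in the proof of Lemma \ref{thmnu}. I would also note that the admissible $\zeta_k$ stays bounded as $\nu_k\searrow0$, since $\frac1{\nu_k}\nabla c_k\nabla c_k^T$ only inflates eigenvalues on $\mathrm{Range}(\nabla c_k)$ and leaves the fixed spectrum of $\tilde W_k$ on the null space intact; consequently $\|M_k\|$ remains bounded along the inner loop.

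The crux is the estimate $\|\nabla c_k^Tt_k\|=O(\nu_k)$. Rearranging $P_kt_k=-g_k$ gives $\frac1{\nu_k}\nabla c_k(\nabla c_k^Tt_k)=-g_k-M_kt_k$, so $\nabla c_k(\nabla c_k^Tt_k)$ has norm $\nu_k\|g_k+M_kt_k\|$. Since $\nabla c_k^Tt_k$ lies in $\mathrm{Range}(\nabla c_k^T)$, on which $\nabla c_k$ is injective, there is a constant $\sigma_k>0$ (the smallest positive singular value of $\nabla c_k$, fixed at $x_k$) with $\|\nabla c_k(\nabla c_k^Tt_k)\|\ge\sigma_k\|\nabla c_k^Tt_k\|$. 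Combining with the bounds on $\|t_k\|$ and $\|M_k\|$ yields
\begin{equation*}
\|\nabla c_k^Tt_k\|\le\frac{\nu_k}{\sigma_k}\left(\|g_k\|+\|M_k\|\,\|t_k\|\right)\le C\nu_k,
\end{equation*}
with $C$ depending only on the data at $x_k$; up to constants this is exactly the factor collected in the denominator of $\nu_k^{\min}$, so $\|\nabla c_k^Tt_k\|\to0$ as $\nu_k\searrow0$.

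To finish I would split on $h_k$. If $h_k>0$, then Algorithm 3 did not stop at its Step 2, so $\nabla c_kc_k\ne0$, the normal step \eqref{b9} reduces the linearized infeasibility strictly, and $r_k:=\|c_k+\nabla c_k^Tv_k\|<h_k\le h_k^{\max}$ by Lemma \ref{lemhmax}; both $\kappa_1(h_k^{\max}-r_k)$ and $\kappa_2(h_k-r_k)$ are then strictly positive, and the estimate above forces $\|\nabla c_k^Tt_k\|$ below both once $\nu_k$ is small, so \eqref{b17} and \eqref{b19} hold simultaneously. As exactly one of \eqref{b16}, \eqref{b18} is true for the computed $t_k$, its matching pair is satisfied. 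If $h_k=0$, then $c_k=0$, the normal step is $v_k=0$, $g_k=\nabla\varphi^{\mu_j}_k$, and
\begin{equation*}
-(\nabla\varphi^{\mu_j}_k)^T(v_k+t_k)=-g_k^Tt_k=g_k^TP_k^{-1}g_k\ge0=\sigma_1h_k^{\sigma_2},
\end{equation*}
so \eqref{b16} holds; since $h_0^{\max}>0$ and \eqref{hmaxnew} never lowers $h_k^{\max}$ to zero, the right-hand side $\kappa_1(h_k^{\max}-\|c_k\|)=\kappa_1h_k^{\max}$ of \eqref{b17} is positive and \eqref{b17} holds for small $\nu_k$. In both cases one of the required pairs is met once $\nu_k$ is small enough, which the halving in Algorithm 2 eventually produces.

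The hard part is the uniform control behind $\|\nabla c_k^Tt_k\|\le C\nu_k$: one must keep $\|t_k\|$ bounded and prevent $\zeta_k$ (hence $\|M_k\|$) from growing as $\nu_k\searrow0$, lest the gain from the factor $\nu_k$ be cancelled. The null-space/range decomposition of Lemma \ref{thmnu} supplies both, since it isolates the fixed, possibly indefinite action of $\tilde W_k$ on $\mathrm{Null}(\nabla c_k^T)$ from the part the penalty sends to $+\infty$. A secondary but essential point is the strict positivity of the right-hand sides of \eqref{b17}, \eqref{b19}, which rests on the Step 2 stopping rule of Algorithm 3 (ruling out $r_k=h_k$ with $h_k>0$) and on $h_k^{\max}>0$ handling the feasible case through \eqref{b16} and \eqref{b17}.
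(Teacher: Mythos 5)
Your proof is correct and follows the same skeleton as the paper's: a $\nu_k$-uniform bound on $\|t_k\|$ from the positive definiteness constant $b_1$, a demonstration that $\|\nabla c_k^Tt_k\|\to0$ as $\nu_k\searrow0$, and a case split on $h_k=0$ versus $h_k>0$ (where non-termination at Step 2 of Algorithm 3 gives $\|c_k+\nabla c_k^Tv_k\|<h_k\le h_k^{\max}$, so the right-hand sides of \eqref{b17} and \eqref{b19} are strictly positive). The one step you do genuinely differently is the crux estimate: the paper takes the inner product of the stationarity equation \eqref{topt} with $\nu_kt_k$, so that $\|\nabla c_k^Tt_k\|^2=-\nu_k\bigl(g_k^Tt_k+t_k^TM_kt_k\bigr)$ and hence $\|\nabla c_k^Tt_k\|=O(\sqrt{\nu_k})$, with a constant built only from $\|g_k\|$, $b_1$ and $\|M_k\|$ — this is the bound \eqref{tcon} whose right-hand side is what literally appears in the denominator of $\nu_k^{\min}$ (your claim that your constant is "exactly" that factor is therefore not quite right). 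You instead invert $\nabla c_k$ on $\mathrm{Range}(\nabla c_k^T)$ via its smallest positive singular value $\sigma_k$, which buys the sharper rate $\|\nabla c_k^Tt_k\|=O(\nu_k)$ at the price of a constant $1/\sigma_k$ that is not controlled uniformly across iterations; for this per-iteration lemma that is harmless, and both routes deliver the conclusion. Your observation that the admissible $\zeta_k$ stays bounded as $\nu_k\searrow0$ (since adding the PSD penalty term can only raise eigenvalues) is a point the paper leaves implicit, and is worth making.
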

\begin{proof}
First, we give an estimate of the scale of $\|t_k\|$. By the optimality of $t_k$ for \eqref{b11}, we have that
\begin{eqnarray*}
\lefteqn{-\|\nabla\varphi_k^{\mu}+\tilde W_kv_k\|\|t_k\|+\frac12b_1\|t_k\|^2}\\
&&\leq (\nabla\varphi_k^{\mu}+\tilde W_kv_k)^Tt_k+\frac12t_k^T(W_k+\zeta_kI)t_k\\
&&\leq (\nabla\varphi_k^{\mu}+\tilde W_kv_k)^Tt_k+\frac12t_k^T\left(W_k+\frac1{\nu_k}\nabla c_k\nabla c_k^T+\zeta_kI\right)t_k\\
&&\leq 0,
\end{eqnarray*}
where the Cauchy-Schwartz inequality and \eqref{pdef} are used. It follows that
\begin{equation}\label{tup}
\|t_k\|\leq\frac{2}{b_1}\|\nabla\varphi_k^{\mu}+\tilde W_kv_k\|.
\end{equation}
By the first order necessary condition of the inexact tangential subproblem \eqref{b11}, the step $t_k$ satisfies
\begin{equation}\label{topt}
\nabla\varphi_k^{\mu}+\tilde W_kv_k+\left(W_k+\frac1{\nu_k}\nabla c_k\nabla c_k^T+\zeta_kI\right)t_k=0.
\end{equation}
Computing the inner products of $\nu_kt_k$ with both sides of this equation, moving terms not involving $\nabla c_k^Tt_k$
to the right hand side and using \eqref{tup} and Cauchy-Schwartz inequality, we obtain
\begin{equation}\label{tcon}
\|\nabla c_k^Tt_k\|^2\leq\frac{2\nu_k}{b_1}\|\nabla\varphi_k^{\mu}+\tilde W_kv_k\|^2\left(1+\frac{2\nu_k}{b_1}\|W_k+\zeta_kI\|\right).
\end{equation}

Note that if $v_k=0$ and the algorithm does not terminate, then $h_k=0$. Since Algorithm 3 does not stop at $x_k$, we have that $-(\nabla\varphi^\mu_k)^Tt_k>0$, which implies \eqref{b16}.
Then, from \eqref{tcon}, condition \eqref{b17} is satisfies if
\begin{equation}\label{numin1}
0<\nu_k\leq\frac{b_1\kappa_1h_k^{\max}}{2\|\nabla\varphi_k^\mu\|^2\left(1+\frac{2\nu_0}{b_1}\|W_k+\zeta_k I\|\right)}.
\end{equation}
If $v_k\neq0$, then from \eqref{tcon}, both \eqref{b17} and \eqref{b19} are satisfied if
\begin{equation}\label{numin}
0<\nu_k\leq\frac{b_1\min\{\kappa_1(h_k^{\max}-\|c_k+\nabla c_k^Tv_k\|),\kappa_2(h_k-\|c_k+\nabla c_k^Tv_k\|)\}}{2\|\nabla\varphi_k^\mu+\tilde W_kv_k\|^2\left(1+\frac{2\nu_0}{b_1}\|W_k+\zeta_k I\|\right)}.
\end{equation}
Since one of \eqref{b16} and \eqref{b18} always holds, the claim is true in this case.
\end{proof}

Obviously, the bounds in \eqref{numin1} and \eqref{numin} motivate the previously mentioned definition of $\nu_k^{\min}$. Next, we
use two lemmas to state the finite termination of Armijo line searches.

\begin{lemma}\label{lemfstep}
Suppose that $\nabla f_k$ and $\nabla c_k$ are Lipschitz continuous on a bounded open convex set $\Omega$ containing all the iterates generated by Algorithm 2,
that $\{B_k\}$ is bounded, and that the Algorithm 2 does not terminate at $x_k$. Suppose also that $t_k$ satisfies \eqref{b16} and \eqref{b17}. Then there
exists a positive scalar $\alpha_k^f$ such that for any $\alpha\in(0,\alpha_k^f]$, both \eqref{fred} and \eqref{hmax} are satisfied.
\end{lemma}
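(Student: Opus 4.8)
The plan is to treat the two required inequalities \eqref{fred} and \eqref{hmax} separately, produce a positive threshold for each, and take $\alpha_k^f$ to be the minimum of these thresholds together with $1$ and $\alpha_k^{\max}$. The starting observation is that $d_k=v_k+t_k$ is a \emph{strict} descent direction for $\varphi^{\mu_j}$: condition \eqref{b16} gives $(\nabla\varphi^{\mu_j}_k)^Td_k\leq -\sigma_1 h_k^{\sigma_2}\leq 0$, which is strict whenever $h_k>0$; in the remaining case $h_k=0$ the normal step \eqref{b9} yields $v_k=0$, so $d_k=t_k$, and the hypothesis that the algorithm does not terminate at $x_k$ forces $-(\nabla\varphi^{\mu_j}_k)^Tt_k>0$ exactly as in the proof of the preceding lemma. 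Hence $g_k:=(\nabla\varphi^{\mu_j}_k)^Td_k<0$ in all cases.

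For \eqref{fred} I would run the standard finite-termination argument for Armijo backtracking. The only delicate point is that $\nabla\varphi^{\mu_j}$ contains the barrier term $-\mu_j X^{-1}e$, whose derivative blows up at the boundary; this is harmless because the fraction-to-the-boundary rule \eqref{b12} keeps the whole segment $\{x_k+\alpha d_k:\alpha\in[0,\alpha_k^{\max}]\}$ inside the strictly positive orthant, where, together with the Lipschitz continuity of $\nabla f_k$ on $\Omega$, $\nabla\varphi^{\mu_j}$ is Lipschitz with some constant $L_\varphi$. A second-order expansion then gives $\varphi^{\mu_j}(x_k+\alpha d_k)\leq\varphi^{\mu_j}_k+\alpha g_k+\frac{1}{2}L_\varphi\alpha^2\|d_k\|^2$, so \eqref{fred} holds as soon as $\frac12 L_\varphi\alpha\|d_k\|^2\leq(1-\rho)(-g_k)$, that is, for all $\alpha$ up to the positive threshold $2(1-\rho)(-g_k)/(L_\varphi\|d_k\|^2)$.

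The more involved inequality is \eqref{hmax}. Writing $h(x_k+\alpha d_k)\leq\|c_k+\alpha\nabla c_k^Td_k\|+\frac12 L_c\alpha^2\|d_k\|^2$ with $L_c$ a Lipschitz constant for $\nabla c$, I would split the linear term using $d_k=v_k+t_k$ as $c_k+\alpha\nabla c_k^Td_k=(1-\alpha)c_k+\alpha(c_k+\nabla c_k^Tv_k)+\alpha\nabla c_k^Tt_k$ and, for $\alpha\in(0,1]$, bound its norm by $(1-\alpha)h_k+\alpha\|c_k+\nabla c_k^Tv_k\|+\alpha\|\nabla c_k^Tt_k\|$. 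Feeding \eqref{b17} into the last term and regrouping yields the key estimate
\begin{equation*}
\|c_k+\alpha\nabla c_k^Td_k\|\leq h_k^{\max}-(1-\alpha)(h_k^{\max}-h_k)-\alpha(1-\kappa_1)(h_k^{\max}-\|c_k+\nabla c_k^Tv_k\|),
\end{equation*}
in which both subtracted quantities are non-negative by Lemma \ref{lemhmax} and $\kappa_1\in(0,1)$. Thus \eqref{hmax} reduces to absorbing the $O(\alpha^2)$ remainder into this linear-in-$\alpha$ slack.

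The main obstacle is guaranteeing that the slack is genuinely positive, which splits into two cases. When $h_k<h_k^{\max}$ the term $(1-\alpha)(h_k^{\max}-h_k)$ is bounded below by a positive constant for $\alpha\leq\frac12$, which dominates $\frac12 L_c\alpha^2\|d_k\|^2$ once $\alpha$ is small. When $h_k=h_k^{\max}$ the first slack term vanishes and everything hinges on the strict inequality $\|c_k+\nabla c_k^Tv_k\|<h_k^{\max}$; I would establish this from the normal-step rule \eqref{b9}, noting that reaching the line search with $h_k>0$ excludes $v_k=0$ (Step 2 would otherwise stop), so $v_k\neq 0$ and the strict optimality of $v_k$ forces $\|c_k+\nabla c_k^Tv_k\|<\|c_k\|=h_k=h_k^{\max}$ (the fully degenerate $h_k=h_k^{\max}=0$ situation corresponds to optimality and needs no treatment). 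In that case the slack is the positive multiple $\alpha(1-\kappa_1)(h_k^{\max}-\|c_k+\nabla c_k^Tv_k\|)$ of $\alpha$, again dominating the quadratic remainder for small $\alpha$. Taking $\alpha_k^f$ to be the minimum of the Armijo threshold, the two feasibility thresholds, $1$, and $\alpha_k^{\max}$ then gives the claim.
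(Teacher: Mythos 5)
Your proposal is correct and follows essentially the same route as the paper: establish that $d_k$ is a strict descent direction (treating $h_k=0$ via $v_k=0$ and non-termination), handle \eqref{fred} by the standard Armijo/Lipschitz bound with the barrier term controlled through the fraction-to-the-boundary rule, and handle \eqref{hmax} by splitting $c_k+\alpha\nabla c_k^Td_k$ as $(1-\alpha)c_k+\alpha(c_k+\nabla c_k^Tv_k)+\alpha\nabla c_k^Tt_k$ and invoking \eqref{b17}. The only cosmetic difference is that you retain the extra slack $(1-\alpha)(h_k^{\max}-h_k)$ and argue by cases, whereas the paper immediately bounds $(1-\alpha)h_k\le(1-\alpha)h_k^{\max}$ and relies solely on the positivity of $h_k^{\max}-\|c_k+\nabla c_k^Tv_k\|$.
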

\begin{proof}
Firstly, we show that $-(\nabla\varphi_k^{\mu})^Td_k>0$. It is trivially true in the case $h_k>0$. Now we consider the
case where $h_k=0$. In this case, we have $v_k=0$ and $d_k=t_k$. If $t_k=0$, then by the first order necessary conditions of \eqref{b11} and \eqref{ez}, we have
$$
\nabla f_k+\nabla c_k\left(\frac1{\nu_k}\nabla c_k^Tt_k\right)-z_{k+1}=0.
$$
It follows that $x_k$ is a stationary point of \eqref{b1}, which contradicts with the assumptions of this lemma. Therefore, there must have $t_k\neq 0$. By \eqref{b11}
and \eqref{pdef},
we get
$$
-(\nabla\varphi_k^{\mu})^Td_k=-(\nabla\varphi_k^{\mu})^Tt_k\geq\frac12t_k^T(W_k+\zeta_kI)t_k\geq b_1\|t_k\|^2>0.
$$

Using the Lipschitz continuity of $\nabla f(x)$, we have for some Lipschitz constant $L_{df}$, such that for any step length $\alpha\in(0,\alpha_k^{\max}]$
\begin{eqnarray*}
\lefteqn{|f(x_k)-f(x_k+\alpha d_k)-(-\alpha\nabla f_k^Td_k)|}\\
&&\leq\alpha\displaystyle{\sup_{x_k^\tau\in[x_k,x_k+\alpha d_k]}}\|\nabla f(x_k^\tau)-\nabla f_k\|\|d_k\|\\
&&\leq \alpha^2 L_{df}\|d_k\|^2.
\end{eqnarray*}
Similarly, for any $i=1,\cdots,n$,
\begin{eqnarray*}
\lefteqn{\left|-\mu_j\sum_{i=1}^n\ln x_k^{(i)}+\mu_j\sum_{i=1}^n\ln(x_k^{(i)}+\alpha d_k^{(i)})-\mu_j\alpha\sum_{i=1}^n\frac{d_k^{(i)}}{x_k^{(i)}}\right|}\\
&&\leq\mu_j\sum_{i=1}^n\left|-\ln x_k^{(i)}+\sum_{i=1}^n\ln(x_k^{(i)}+\alpha d_k^{(i)})-\alpha\frac{d_k^{(i)}}{x_k^{(i)}}\right|\\
&&\leq\frac{\mu_j\alpha^2}{1-\tau_j}\sum_{i=1}^n\left(\frac{d_k^{(i)}}{x_k^{(i)}}\right)^2\leq\frac{\mu_j\alpha^2}{(1-\tau_j)\min_i(x_k^{(i)})^2}\|d_k\|^2.
\end{eqnarray*}

Using the abovementioned two inequalities, we have
\begin{equation*}
\begin{split}&|\varphi_k^{\mu}-\varphi^{\mu}(x_k+\alpha d_k)-(-\alpha(\nabla\varphi_k^{\mu})^Td_k)|\\
\leq&\alpha^2\left(L_{df}+\frac{\mu_j}{(1-\tau_j)\min_i(x_k^{(i)})^2}\right)\|d_k\|^2.\end{split}
\end{equation*}
This inequality implies
\begin{equation}\label{apd}
\begin{split}&\frac{|\varphi_k^{\mu}-\varphi^{\mu}(x_k+\alpha d_k)-(-\alpha(\nabla\varphi_k^{\mu})^Td_k)|}{-\alpha(\nabla\varphi_k^{\mu})^Td_k}\\
\leq&\alpha\frac{\left(L_{df}+\frac{\mu_j}{(1-\tau_j)\min_i(x_k^{(i)})^2}\right)\|d_k\|^2}{-(\nabla\varphi_k^{\mu})^Td_k}.\end{split}
\end{equation}
Let
$$
0<\alpha\leq\frac{-(1-\rho)(\nabla\varphi_k^{\mu})^Td_k}{\left(L_{df}+\frac{\mu_j}{(1-\tau_j)\min_i(x_k^{(i)})^2}\right)\|d_k\|^2}.
$$
Then it follows from \eqref{apd} that
\begin{equation*}
\frac{|\varphi_k^{\mu}-\varphi^{\mu}(x_k+\alpha d_k)-(-\alpha(\nabla\varphi_k^{\mu})^Td_k)|}{-\alpha(\nabla\varphi_k^{\mu})^Td_k}\leq 1-\rho,
\end{equation*}
which yields \eqref{fred}.

Next, we consider \eqref{hmax}.  Using \eqref{hnew}, \eqref{b17} and convexity, we have
\begin{equation}\label{c1}
\begin{split}
\|c(x_k+\alpha d_k)\|\leq&\|c_k+\alpha\nabla c_k^Td_k\|+\alpha^2 L_{dc}\|d_k\|^2\\
\leq&(1-\alpha)\|c_k\|+\alpha\|c_k+\nabla c_k^Tv_k\|+\alpha\|\nabla c_k^Tt_k\|+\alpha^2 L_{dc}\|d_k\|^2\\
\leq&(1-\alpha)h_k^{\max}+\alpha\|c_k+\nabla c_k^Tv_k\|+\alpha\kappa_1(h_k^{\max}-\|c_k+\nabla c_k^Tv_k\|)+\alpha^2 L_{dc}\|d_k\|^2\\
=&h_k^{\max}-\alpha(1-\kappa_1)(h_k^{\max}-\|c_k+\nabla c_k^Tv_k\|)+\alpha^2 L_{dc}\|d_k\|^2.
\end{split}\end{equation}
Since the algorithm does not terminate at $x_k$, we have $v_k\neq0$, which implies $h_k^{\max}-\|c_k+\nabla c_k^Tv_k\|>0$. Then, from \eqref{c1}, it follows that
\eqref{hmax} is satisfied if
$$
0<\alpha\leq\frac{(1-\kappa_1)(h_k^{\max}-\|c_k+\nabla c_k^Tv_k\|)}{L_{dc}\|d_k\|^2}.
$$

Summarizing all the arguments, both \eqref{fred} and \eqref{hmax} are satisfied  for any $\alpha\in(0,\alpha_k^f]$, where
\begin{equation}\label{alphaf}
\alpha_k^f:=\min\left\{\frac{-(1-\rho)(\nabla\varphi_k^{\mu})^Td_k}{\left(L_{df}+\frac{\mu_j}{(1-\tau_j)\min_i(x_k^{(i)})^2}\right)\|d_k\|^2},
\frac{(1-\kappa_1)(h_k^{\max}-\|c_k+\nabla c_k^Tv_k\|)}{L_{dc}\|d_k\|^2}\right\}.
\end{equation}
\end{proof}

\begin{lemma}\label{lemhstep}
Suppose that $\nabla f_k$ and $\nabla c_k$ are Lipschitz continuous on a bounded open convex set $\Omega$ containing all the iterates generated by Algorithm 2,
that $\{B_k\}$ is bounded, and that the Algorithm 2 does not terminate at $x_k$. Suppose also that $t_k$ satisfies \eqref{b18} and \eqref{b19}. Then there
exists a positive constant $\alpha_k^h$ such that for any $\alpha\in(0,\alpha_k^h]$, the condition \eqref{hred} is satisfied.
\end{lemma}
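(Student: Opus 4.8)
The plan is to mirror the structure of the proof of Lemma~\ref{lemfstep}, but now controlling the feasibility measure $h$ in place of the barrier objective. Rearranged, the test \eqref{hred} is an Armijo-type sufficient-decrease condition for $h$: it requires that the actual reduction $h_k-h(x_k+\alpha d_k)$ be at least the fraction $\rho$ of the \emph{predicted} reduction $h_k-\|c_k+\alpha\nabla c_k^Td_k\|$ taken on the linearized model. So I would proceed in two stages: first lower-bound the predicted reduction, then use the Lipschitz continuity of $\nabla c$ to show that the actual reduction tracks the predicted one once $\alpha$ is small.

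For the predicted reduction, I would write $c_k+\alpha\nabla c_k^Td_k=(1-\alpha)c_k+\alpha(c_k+\nabla c_k^Tv_k)+\alpha\nabla c_k^Tt_k$, take norms (assuming $\alpha\le 1$, which costs nothing since we may shrink the threshold below $1$), and control $\|\nabla c_k^Tt_k\|$ by \eqref{b19}. A short computation then yields
\begin{equation*}
\|c_k+\alpha\nabla c_k^Td_k\|\leq h_k-\alpha(1-\kappa_2)\bigl(h_k-\|c_k+\nabla c_k^Tv_k\|\bigr),
\end{equation*}
so the predicted reduction is at least $\alpha(1-\kappa_2)(h_k-\|c_k+\nabla c_k^Tv_k\|)$. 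Because Algorithm~3 does not terminate at $x_k$, the normal step satisfies $v_k\neq 0$, whence $h_k-\|c_k+\nabla c_k^Tv_k\|>0$; this makes the predicted reduction strictly positive for every $\alpha>0$, which is the point that keeps the Armijo test nonvacuous.

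For the actual reduction I would reuse the second-order bound already established in the proof of Lemma~\ref{lemfstep} (the first line of \eqref{c1}), namely $h(x_k+\alpha d_k)\leq\|c_k+\alpha\nabla c_k^Td_k\|+\alpha^2L_{dc}\|d_k\|^2$. Substituting this into \eqref{hred} and cancelling the common term, condition \eqref{hred} reduces to $\alpha^2L_{dc}\|d_k\|^2\leq(1-\rho)\bigl(h_k-\|c_k+\alpha\nabla c_k^Td_k\|\bigr)$; invoking the lower bound on the predicted reduction from the previous step, it then suffices that $\alpha L_{dc}\|d_k\|^2\leq(1-\rho)(1-\kappa_2)(h_k-\|c_k+\nabla c_k^Tv_k\|)$. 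This isolates the threshold
\begin{equation*}
\alpha_k^h:=\frac{(1-\rho)(1-\kappa_2)\bigl(h_k-\|c_k+\nabla c_k^Tv_k\|\bigr)}{L_{dc}\|d_k\|^2},
\end{equation*}
which is strictly positive by the preceding remark, and \eqref{hred} holds for every $\alpha\in(0,\alpha_k^h]$.

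The argument is essentially routine once these two ingredients are assembled, so I do not anticipate a serious obstacle; the only delicate point is confirming that the predicted reduction is genuinely positive, i.e.\ that $v_k\neq 0$ (and hence $\|d_k\|>0$, so the threshold is well defined). This rests on the non-termination hypothesis, exactly as in Lemma~\ref{lemfstep}, together with the fact that a nonzero normal step strictly decreases the linearized infeasibility $\|c_k+\nabla c_k^Tv_k\|$ below $h_k$. The degenerate case $d_k=0$, if it arises, renders \eqref{hred} a trivial equality and can be dispatched separately.
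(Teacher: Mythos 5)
Your argument is correct and follows essentially the same route as the paper's: the second-order Lipschitz bound $\|c(x_k+\alpha d_k)\|\leq\|c_k+\alpha\nabla c_k^Td_k\|+\alpha^2L_{dc}\|d_k\|^2$, combined with convexity and \eqref{b19} to bound the linearized decrease from below, then a threshold on $\alpha$ that makes the quadratic error term small relative to that decrease. The one substantive difference is that the paper does not stop at the qualitative fact $h_k-\|c_k+\nabla c_k^Tv_k\|>0$: it splits into the rank-deficient and full-rank cases of \eqref{b9} and, via the optimality of $v_k$, derives the quantitative bound \eqref{hpred}, i.e. $\|c_k\|-\|c_k+\nabla c_k^Tv_k\|\geq\|c_k\|^{\delta}\|v_k\|^2/(2\|c_k\|)$ (respectively $=\|c_k\|$ in the full-rank case), so that $\alpha_k^h$ becomes an explicit function of $\|v_k\|$ and $\|c_k\|$ as in \eqref{c5}. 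That explicit form, not mere positivity, is what is invoked later in Lemma \ref{lemfes1} to drive $h_k\to 0$, so while your version fully proves the lemma as stated, you would need to add the \eqref{hpred}-type estimate before reusing the threshold in the global convergence analysis. Your handling of the side issues ($\alpha\leq 1$, $v_k\neq 0$ via \eqref{b18} plus non-termination, and the degenerate case $d_k=0$) is sound.
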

\begin{proof}
Since \eqref{b18} holds and the algorithm does not terminate at $x_k$, we have $v_k\neq 0$, which implies $h_k>0$. We firstly consider the case where $\nabla c_k$ is rank deficient. By the optimality of $v_k$ for \eqref{b9}, we have
\begin{equation*}
\|c_k+\nabla c_k^Tv_k\|^2\leq\|c_k+\nabla c_k^Tv_k\|^2+\|c_k\|^\delta\|v_k\|^2\leq\|c_k\|^2,
\end{equation*}
which yields
\begin{equation}\label{hpred}
\|c_k\|-\|c_k+\nabla c_k^Tv_k\|\geq\frac{\|c_k\|^\delta\|v_k\|^2}{\|c_k\|+\|c_k+\nabla c_k^Tv_k\|}\geq
\frac{\|c_k\|^\delta\|v_k\|^2}{2\|c_k\|}.
\end{equation}

By the Lipschitz continuity of $\nabla c(x)$, we have, for some Lipschitz constant $L_{dc}$ and any step length $\alpha\in(0,\alpha_k^{\max}]$, that
\begin{equation*}
\begin{split}
&\|c(x_k+\alpha d_k)\|-\|c_k+\alpha\nabla c_k^Td_k\|\\
&\leq\|c(x_k+\alpha d_k)-(c_k+\alpha\nabla c_k^Td_k)\|\\
&\leq \alpha^2\displaystyle{\sup_{x_k^{\tau}\in[x_k,x_k+\alpha d_k]}}\|\nabla c(x_k^\tau)-\nabla c_k\|\|d_k\|\\
&\leq\alpha^2 L_{dc}\|d_k\|^2,
\end{split}
\end{equation*}
which yields
\begin{equation}\label{hnew}
\|c(x_k+\alpha d_k)\|\leq\|c_k+\alpha\nabla c_k^Td_k\|+\alpha^2 L_{dc}\|d_k\|^2.
\end{equation}

Using \eqref{b19}, \eqref{hpred}, \eqref{hnew} and convexity, we have
\begin{equation*}
\begin{split}
\|c(x_k+\alpha d_k)\|\leq&\|c_k+\alpha\nabla c_k^Td_k\|+\alpha^2 L_{dc}\|d_k\|^2\\
=&(1-\rho)\|c_k+\alpha\nabla c_k^Td_k\|+\rho\|c_k+\alpha\nabla c_k^Td_k\|+\alpha^2 L_{dc}\|d_k\|^2\\
\leq&(1-\rho)((1-\alpha)\|c_k\|+\alpha\|c_k+\nabla c_k^Td_k\|)+\rho\|c_k+\alpha\nabla c_k^Td_k\|+\alpha^2 L_{dc}\|d_k\|^2\\
=&(1-\rho)\|c_k\|-(1-\rho)\alpha(\|c_k\|-\|c_k+\nabla c_k^Td_k\|)\\
&+\rho\|c_k+\alpha\nabla c_k^Td_k\|+\alpha^2 L_{dc}\|d_k\|^2\\
\leq&(1-\rho)\|c_k\|-(1-\rho)\alpha(\|c_k\|-\|c_k+\nabla c_k^Tv_k\|-\|\nabla c_k^Tt_k\|)\\
&+\rho\|c_k+\alpha\nabla c_k^Td_k\|+\alpha^2 L_{dc}\|d_k\|^2\\
\leq&(1-\rho)\|c_k\|-(1-\rho)(1-\kappa_2)\alpha(\|c_k\|-\|c_k+\nabla c_k^Tv_k\|)\\
&+\rho\|c_k+\alpha\nabla c_k^Td_k\|+\alpha^2 L_{dc}\|d_k\|^2\\
\leq&(1-\rho)\|c_k\|+\rho\|c_k+\alpha\nabla c_k^Td_k\|\\
&-\frac12(1-\rho)(1-\kappa_2)\alpha\|c_k\|^{\delta-1}\|v_k\|^2+\alpha^2 L_{dc}\|d_k\|^2
\end{split}
\end{equation*}
Define
$$
\alpha_k^h:=\frac{(1-\rho)(1-\kappa_2)\|c_k\|^{\delta-1}\|v_k\|^2}{2L_{dc}\|d_k\|^2}.
$$
 Then \eqref{hred} is satisfied for all $\alpha\in(0,\alpha_k^h]$.

 Next, we consider the other case where $\nabla c_k$ has full rank. In this case, we have
 $$
 v_k=-\nabla c_k(\nabla c_k^T\nabla c_k)^{-1}c_k,
 $$
 which satisfies
$$
c_k+\nabla c_k^Tv=0.
$$
Then $\|c_k\|-\|c_k+\nabla c_k^T v_k\|=\|c_k\|$. Using the similar arguments as the first case,  \eqref{hred} is satisfied for all $\alpha\in(0,\alpha_k^h]$ with
\begin{equation}\label{c5}
\alpha_k^h:=\frac{(1-\rho)(1-\kappa_2)\|c_k\|}{2L_{dc}\|d_k\|^2}.
\end{equation}
\end{proof}

Hence, we conclude that the presented algorithm is welldefineded.
\begin{theorem}
Suppose that $\nabla f_k$ and $\nabla c_k$ are Lipschitz continuous on a bounded open convex set $\Omega$ containing all the iterates generated by Algorithm 2,
that $\{B_k\}$ is bounded, then Algorithm 3 is welldefined.
\end{theorem}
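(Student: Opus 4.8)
The plan is to read ``well-defined'' as the requirement that every step of Algorithm 3 can be executed with a finite amount of computation and returns a legitimate next iterate $(x_{k+1},\lambda_{k+1},z_{k+1})$ with $(x_{k+1},z_{k+1})>0$. Unwinding the algorithm, the only places where finiteness is in question are the inner loop of Algorithm 2 (which repeatedly halves $\nu_k$) and the two backtracking Armijo searches in Steps 7 and 8. The remaining operations are immediate: the normal step in Step 2 solves the convex problem \eqref{b9} (a least-squares problem, regularized when $\nabla c_k$ is rank deficient, hence always solvable); the optimality test in Step 1 is a single evaluation; and the fraction-to-the-boundary rule \eqref{b12} in Step 5 yields a strictly positive $\alpha_k^{\max}$ precisely because $(x_k,z_k)>0$ and $\tau_j\in(0,1)$, which, together with the resetting \eqref{resetz}, also guarantees $(x_{k+1},z_{k+1})>0$. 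So the theorem reduces to two finiteness claims, both of which have been prepared by the preceding lemmas.

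First I would dispose of Algorithm 2, which I expect to be the main obstacle. The loop halves $\nu_k$ and can only cycle for two reasons: either $\tilde W_k+\tfrac1{\nu_k}\nabla c_k\nabla c_k^T+\zeta_k I$ fails to be positive definite, or it is positive definite but the computed $t_k$ does not yet satisfy \eqref{b16}--\eqref{b17} or \eqref{b18}--\eqref{b19}. I would argue termination by a case split. If $H_k$ is positive definite on $\mathrm{Null}(\nabla c_k^T)$ and $\nabla c_k$ has full rank, then by Lemma \ref{thmnu} the matrix becomes positive definite once $\nu_k\le\bar\nu$ with $\zeta_k=0$, and by the lemma that produced the explicit bounds \eqref{numin1} and \eqref{numin} the step conditions hold as soon as $\nu_k$ meets one of those bounds; since halving drives $\nu_k$ below any fixed positive threshold in finitely many steps, the loop stops. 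In the degenerate case the matrix may never become positive definite with $\zeta_k=0$, but then $\nu_k$ eventually drops below $\nu_k^{\min}$, at which point Step 2 sets $\zeta_k$ to enforce the uniform lower bound \eqref{pdef}; the key point is that $\nu_k^{\min}$ was defined to match, up to the constant $\kappa_\nu$, the very bounds \eqref{numin1} and \eqref{numin} obtained from \eqref{tcon}, so the $t_k$ computed with this $\zeta_k$ automatically satisfies the required pair of conditions. Either branch terminates after finitely many halvings.

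The hard part thus lies in reconciling the two mechanisms inside Algorithm 2 --- reducing $\nu_k$ for positive definiteness versus reducing it for the step conditions --- and in checking that the $\nu_k^{\min}$ and $\zeta_k$ safeguard does not short-circuit the conditions; this is exactly the content packaged into the definition of $\nu_k^{\min}$ and into the inequality \eqref{tcon}. Once Algorithm 2 is known to return a valid triple $(\nu_k,\zeta_k,t_k)$, the line searches are routine. I would invoke Lemma \ref{lemfstep} in an $f$-iteration (where \eqref{b16} holds) to produce a threshold $\alpha_k^f>0$, given by \eqref{alphaf}, below which both \eqref{fred} and \eqref{hmax} are met, and Lemma \ref{lemhstep} in an $h$-iteration (where \eqref{b16} fails) to produce $\alpha_k^h>0$, as in \eqref{c5}, below which \eqref{hred} holds. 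Because Steps 7.3 and 8.3 halve $\alpha$ starting from $\alpha_k^{\max}$, after finitely many halvings $\alpha$ falls below the relevant threshold and the acceptance test passes; hence each iteration of Algorithm 3 terminates and produces the next iterate, which is what well-definedness asserts.
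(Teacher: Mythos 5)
Your proposal is correct and follows exactly the route the paper intends: the paper states this theorem without an explicit proof, simply as the synthesis of the preceding lemma on the admissibility of small $\nu_k$ (via the bounds \eqref{numin1} and \eqref{numin}) together with Lemmas \ref{lemfstep} and \ref{lemhstep}, combined with the observation that repeated halving drives $\nu_k$ and $\alpha$ below the respective positive thresholds in finitely many steps. Your write-up supplies precisely that assembly (plus the routine checks on the remaining steps), so it matches the paper's argument.
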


\section{Global convergence analysis.}

To establish the global convergence theory for Algorithm 3, we need the following standard assumptions.
\begin{assumption}\label{ass}
Let $\{x_k\}$ be the sequence of the iterates generated by algorithm 3.
\begin{enumerate}[\ \ \ \ \ \ ({A}1)]
\item There is a bounded open convex set $\Omega$ containing $\{x_k\}$.

\item The Gradients $\nabla f_k$ and $\nabla c_k$ are Lipschitz continuous on $\Omega$.

\item The symmetric matrix $H_k$ is uniformly bounded.

\item The linear independence constraint qualification is satisfied at any  accumulation point $\tilde x$ of $\{x_k\}$.

 \item The (approximate) Hessian $H_k$ is uniformly positive definite on the null space of $\nabla c_k^T$, i.e., there is a neighborhood $\mathcal{N}$ of $\tilde x$ and a positive scalar $b>0$ such that
\begin{equation}\label{as5}
d^TH_kd\geq b\|d\|^2
\end{equation}
for all $x_k\in\mathcal{N}$ and $d\in\{p\ |\ \nabla c_k^Tp=0\}$.\end{enumerate}
\end{assumption}

%\emph{Remark:}

Assume that the algorithm does not terminate finitely. We begin our global convergence analysis by showing that, under Assumptions \ref{ass},
the components of the iterates generated by Algorithm 3 is bounded away
from 0 near the feasible region.
In \cite[Theorem 3]{WaB2005}, W{\"a}chter and Biegler pointed out that, under reasonable assumptions, for a give barrier parameter $\mu_j$, the
primal iterate sequence
$\{x_k\}$ generated by Newton-Lagrange methods for barrier problem is bounded away from 0. Here, we present the similar result in the context of
our algorithm and give a
more detailed proof, which follows the basic ideas of W{\"a}chter and Biegler's proof and uses the classical perturbation theory for linear system \cite{stewart1990matrix}. First, we consider the case where $x_k$ is not generated
by the restoration algorithm.
\begin{lemma}\label{lemd1}
Under Assumptions \ref{ass}, suppose that $\tilde x$ is an accumulation point of $\{x_k\}$, and that
 $$
 \mathcal{A}(\tilde x)=\{i\ |\ \tilde x^{(i)}=0\}
 $$
 is the set of active inequality constraints.
 Then there is a neighborhood $\mathcal{N}$ of $\tilde x$ such that for any $x_k\in\mathcal{N}$, there is
$d_k^{(i)}>0$ whenever $i\in\mathcal{A}(\tilde x)$.
\end{lemma}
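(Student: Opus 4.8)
The plan is to analyze the \emph{scaled} search direction $p_k:=X_k^{-1}d_k$ rather than $d_k$ itself, because at an active index $i\in\mathcal A(\tilde x)$ the component $d_k^{(i)}=x_k^{(i)}p_k^{(i)}$ tends to $0$ while $p_k^{(i)}$ stays $O(1)$, and proving $d_k^{(i)}>0$ is equivalent to proving $p_k^{(i)}>0$. First I would eliminate the multipliers. Combining the first-order condition \eqref{topt} with the definition \eqref{elam} of $\lambda_{k+1}$ and writing $\Sigma_k:=X_k^{-1}Z_k$ and $d_k=v_k+t_k$, the total step satisfies
\[
(H_k+\Sigma_k+\zeta_kI)d_k+\nabla c_k\lambda_{k+1}=-\nabla\varphi_k^{\mu}+\zeta_kv_k,\qquad \nabla c_k^Td_k=-c_k+r_k,
\]
with $r_k:=(c_k+\nabla c_k^Tv_k)+\nabla c_k^Tt_k$. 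Left-multiplying the first relation by $X_k$, using $X_k\Sigma_kX_k=X_kZ_k$ and $-X_k\nabla\varphi_k^{\mu}=-X_k\nabla f_k+\mu_je$ (recall \eqref{ez}), and substituting $d_k=X_kp_k$ yields the scaled saddle-point system
\[
\begin{pmatrix}X_kH_kX_k+X_kZ_k+\zeta_kX_k^2 & X_k\nabla c_k\\ (X_k\nabla c_k)^T & 0\end{pmatrix}\begin{pmatrix}p_k\\ \lambda_{k+1}\end{pmatrix}=\begin{pmatrix}-X_k\nabla f_k+\mu_je+\zeta_kX_kv_k\\ -c_k+r_k\end{pmatrix}.
\]
The decisive structural fact is that the reset rule \eqref{resetz}, applied at iteration $k-1$, forces $x_k^{(i)}z_k^{(i)}\in[\mu_j/\kappa_\sigma,\kappa_\sigma\mu_j]$, so the diagonal block $X_kZ_k$ has entries bounded above and, crucially, bounded away from $0$.

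Second, I would show that $p_k$ and $\lambda_{k+1}$ are uniformly bounded for $x_k$ near $\tilde x$ by a perturbation argument. Passing to a subsequence with $x_k\to\tilde x$ (and, by boundedness, with the remaining data convergent), the coefficient matrix of the scaled system tends to a block matrix $M$ in which the active rows and columns decouple: since $\tilde x^{(i)}=0$ for $i\in\mathcal A(\tilde x)$, every entry of $X_kH_kX_k$ and of $X_k\nabla c_k$ touching an active index vanishes, leaving in the active block only the diagonal $D_{\mathcal A}=\textrm{diag}(\lim x_k^{(i)}z_k^{(i)})_{i\in\mathcal A}$, which is positive definite by the bound above. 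The surviving inactive-plus-multiplier block is a saddle-point matrix whose constraint part $\tilde X_{\mathcal I}(\nabla c)_{\mathcal I}$ has full column rank by the linear independence constraint qualification (A4), and whose leading block is positive definite on the relevant null space by (A5) together with the safeguard \eqref{pdef}; hence $M$ is nonsingular. The classical perturbation theory for linear systems \cite{stewart1990matrix} then provides a uniform bound on the inverse of the coefficient matrix near $\tilde x$, while the right-hand side is uniformly bounded (the gradient terms by (A1)–(A2), $\zeta_kX_kv_k$ by boundedness of $\zeta_k$ and $v_k$, and $r_k$ by $\|c_k\|$ together with the tangential bounds \eqref{b17}/\eqref{b19} on $\|\nabla c_k^Tt_k\|$). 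Therefore $(p_k,\lambda_{k+1})$ is uniformly bounded.

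Third, with this boundedness in hand I would read off the sign from the active rows. For $i\in\mathcal A(\tilde x)$, row $i$ of the scaled system is
\[
x_k^{(i)}\!\sum_j(H_k)_{ij}x_k^{(j)}p_k^{(j)}+x_k^{(i)}z_k^{(i)}p_k^{(i)}+\zeta_k(x_k^{(i)})^2p_k^{(i)}+x_k^{(i)}(\nabla c_k\lambda_{k+1})^{(i)}=-x_k^{(i)}(\nabla f_k)^{(i)}+\mu_j+\zeta_kx_k^{(i)}v_k^{(i)}.
\]
Every term carrying an explicit factor $x_k^{(i)}$ tends to $0$ as $x_k^{(i)}\to0$, because $p_k$, $\lambda_{k+1}$, $H_k$, $\nabla f_k$, $\nabla c_k$, $\zeta_k$ and $v_k$ are bounded, so the row collapses to $x_k^{(i)}z_k^{(i)}p_k^{(i)}=\mu_j+o(1)$. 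As $x_k^{(i)}z_k^{(i)}>0$, the sign of $p_k^{(i)}$ equals that of $\mu_j+o(1)$, whence $p_k^{(i)}>0$ and therefore $d_k^{(i)}=x_k^{(i)}p_k^{(i)}>0$ for all $x_k$ in a sufficiently small neighbourhood $\mathcal N$ of $\tilde x$.

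The main obstacle is the second step, specifically verifying that the limiting inactive saddle-point block is nonsingular: assumption (A5) asserts positive definiteness of $H_k$ on $\textrm{Null}(\nabla c_k^T)$ in the \emph{unscaled} full space, whereas the scaled system demands positive definiteness of $\tilde X_{\mathcal I}H_{\mathcal I\mathcal I}\tilde X_{\mathcal I}+\tilde X_{\mathcal I}\tilde Z_{\mathcal I}$ on the null space of $(\nabla c)_{\mathcal I}^T\tilde X_{\mathcal I}$, where $\tilde Z_{\mathcal I}$ is the limiting inactive dual block. One must reconcile these two null spaces and confirm that $\zeta_k$ stays bounded. Once this uniform nonsingularity is secured, the rest is the routine perturbation estimate and the limiting computation above.
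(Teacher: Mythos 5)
Your proposal is essentially the paper's own argument: both scale the step by the primal variables so that the active components become $O(1)$, both rely on the reset rule \eqref{resetz} to pin $x_k^{(i)}z_k^{(i)}$ into $[\mu_j/\kappa_\sigma,\kappa_\sigma\mu_j]$, both invoke (A4) to get full column rank of the inactive constraint block and (A5) for definiteness of the inactive Hessian block, and both finish with the perturbation bound from \cite{stewart1990matrix} to conclude that the scaled active components converge to the strictly positive solution of $X_k^aZ_k^a\bar d=\mu_je_a$. The only execution differences are cosmetic: the paper scales \emph{only} the active rows and columns by $X_k^a$ and then Schur-eliminates $(d_k^l,\lambda_{k+1})$, so the perturbation estimate is applied to a small diagonal system in $\tilde d_k^a$ alone, whereas you scale everything by $X_k$ and argue block decoupling of the full limit matrix; these are equivalent. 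The two obstacles you flag at the end both dissolve the way the paper handles them. For $\zeta_k$: near $\tilde x$ assumptions (A4)--(A5) let Lemma \ref{thmnu} apply, so $\tilde W_k+\frac1{\nu_k}\nabla c_k\nabla c_k^T$ becomes positive definite before $\nu_k$ falls below $\nu_k^{\min}$ and hence $\zeta_k=0$ there --- you need not track it at all. For the null-space reconciliation: given $d^l\in\textrm{Null}((\nabla c_k^l)^T)$, the embedding $\bar d=(0,d^l)$ lies in $\textrm{Null}(\nabla c_k^T)$, so (A5) gives $(d^l)^TH_k^{ll}d^l=\bar d^TH_k\bar d\geq b\|d^l\|^2$; since the scaling matrix $\tilde X_{\mathcal I}$ is a fixed positive diagonal, positive definiteness of $\tilde X_{\mathcal I}H_{\mathcal I\mathcal I}\tilde X_{\mathcal I}+\tilde X_{\mathcal I}\tilde Z_{\mathcal I}$ on $\textrm{Null}((\nabla c)_{\mathcal I}^T\tilde X_{\mathcal I})=\tilde X_{\mathcal I}^{-1}\textrm{Null}((\nabla c)_{\mathcal I}^T)$ follows immediately, and the saddle-point limit is nonsingular. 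With those two points supplied, your proof is complete and matches the paper's.
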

\begin{proof}
For simplicity, we abbreviate $\mathcal{A}(\tilde x)$ as $\mathcal{A}$ in this proof.  Let $\bar{\mathcal{A}}$ be the set of inactive inequality constraints. Denote $n_a=|\mathcal{A}|$, $n_l=|\bar{\mathcal{A}}|$.
Without loss of generality, we assume $n_a\geq1$. Define $\tilde \delta=0.5\min\{\tilde x^{(i)}\ |\ i\in\bar{\mathcal{A}}\}$. Then there
is a neighborhood $\mathcal{N}$ such that $x_k^{(i)}\geq \tilde \delta$ for all $x_k\in\mathcal{N}$ and $i\in\bar{\mathcal{A}}$.

By Assumptions \ref{ass}, there is a (smaller) neighborhood $\mathcal{N}$ of $\tilde x$ such that $\nabla c_k$ has full rank and $H_k$ is
positive definite on the null
space of $\nabla c_k^T$ in the sense of \eqref{as5} for any $x_k\in\mathcal{N}$. Then from the definition of $v_k$, we have
\begin{equation}\label{vequ}
c_k+\nabla c_k^Tv_k=0.
\end{equation}
Let us shrink $\mathcal{N}$, if necessary, till $\nu_k^{\min}<\bar\nu$. By Lemma \ref{thmnu} and Algorithm 2,
$H_k+\frac1{\nu_k}\nabla c_k\nabla c_k^T$ will become positive definite before $\nu_k$ goes below $\nu_k^{\min}$,
which indicates that $\zeta_k=0$. It follows from \eqref{topt} that
\begin{equation}\label{tequ}
(H_k+X_k^{-1}Z_k)^Td_k+\nabla c_k\left(\frac1{\nu_k}\nabla c_k^Tt_k\right)=-\nabla f_k+\mu_jX_k^{-1}e.
\end{equation}
From \eqref{b17}, \eqref{b19}, \eqref{elam}, \eqref{vequ} and \eqref{tequ}, we have the following linear system
\begin{equation}\label{iplin}
\begin{pmatrix}
H_k+X_k^{-1}Z_k&\nabla c_k\\
\nabla c_k^T&0
\end{pmatrix}\begin{pmatrix}d_k\\ \lambda_{k+1}\end{pmatrix}=
-\begin{pmatrix}
\nabla f_k-\mu_jX_k^{-1}e\\
c_k-\nabla c_k^Tt_k.
\end{pmatrix}
\end{equation}

Following the ideas of W{\"a}chter and Biegler but with slight difference, we denote with $x_k^a$ the vector of
components in $\mathcal{A}$ and with $x_k^l$ the vector of remaining ones.
Without loss of generality, we assume that $x_k=(x_k^a,x_k^l)$. Similarly, we define $\nabla c_k^a$, $\nabla c_k^l$ and etc.
Rewrite the linear system \eqref{iplin} by scaling the first rows and columns
by $X_k^a$:
\begin{equation}\label{d3}
\begin{pmatrix}
X_k^aH_k^{aa}+X_k^aZ_k^a&X_k^aH_k^{al}&X_k^a\nabla c_k\\
H_k^{la}X_k^a\nabla c_k&H_k^{ll}+(X_k^{l})^{-1}Z_k^{l}&\nabla c_k^{l}\\
(\nabla c_k^a)^TX_k^a&(\nabla c_k^l)^T&0
\end{pmatrix}\begin{pmatrix}\tilde d_k^a\\ d_k^l\\ \lambda_{k+1}\end{pmatrix}=
-\begin{pmatrix}
X_k^a\nabla f_k^a-\mu_je_a\\
\nabla f_k^l-\mu_j(X_k^l)^{-1}e_l\\
c_k-\nabla c_k^Tt_k
\end{pmatrix},
\end{equation}
where $\tilde d_k^a=(X_k^a)^{-1}d_k^a$, $e_a\in R^{n_a}$ and $e_l\in R^{n_l}$.
Partitioning the coefficient matrix of this linear system into
$$
\begin{pmatrix}W_{11}&W_{12}\\ W_{21}& W_{22}\end{pmatrix},
$$
where
$$\begin{array}{cc}
W_{11}=X_k^aH_k^{aa}+X_k^aZ_k^a,& W_{12}=(X_k^aH_k^{al}\ X_k^a\nabla c_k )\\
W_{21}=\begin{pmatrix}H_k^{la}X_k^a\nabla c_k\\ (\nabla c_k^a)^TX_k^a
\end{pmatrix},& W_{22}=\begin{pmatrix}H_k^{ll}+(X_k^{l})^{-1}Z_k^{l}&\nabla c_k^{l}\\
(\nabla c_k^l)^T&0\end{pmatrix}.
\end{array}$$
By assumption (A4), the matrix
$$
\begin{pmatrix}
\nabla c^s(\tilde x)&I\\
\nabla c^l(\tilde x)&0
\end{pmatrix}
$$
has full rank, from which it follows that $\nabla c^l(\tilde x)$ has full rank. Then there is (a smaller) neighborhood
$\mathcal{N}$ of
$\tilde x$ such that $\nabla c_k^l$ has full rank. For any $d^l$ in $\textrm{Null}((\nabla c_k^l)^T)$, the vector
$\bar d=(0,d^l)$ lies in $\textrm{Null}(\nabla c_k^T)$. Thus, from assumption (A5), we get
$$
(d_k^l)^TH_k^{ll}d_k^l=\bar d^T H_k\bar d\geq b\|\bar d\|^2=b\|d^l\|^2,
$$
i.e., $H_k^{ll}$ is positive definite in $\textrm{Null}((\nabla c_k^l)^T)$. Then $W_{22}$ is nonsingular in $\mathcal{N}$.
By Assumptions \ref{ass} and using $x_k^l\geq\tilde \delta e_l$ and \eqref{resetz}, their is a positive constant $M_W$ such that
$\|W_{22}^{-1}\|\leq M_W$. By eliminating $d_k^l$ and $\lambda_{k+1}$ in \eqref{d3}, we obtain
\begin{equation}\label{dx1}
(W_{11}-W_{12}W_{22}^{-1}W_{21})\tilde d_k^a=-(X_k^a\nabla f_k^a-\mu_je_a-W_{12}W_{22}^{-1}g),
\end{equation}
where
$$
g=\begin{pmatrix}\nabla f_k^l-\mu_j(X_k^l)^{-1}e_l\\
c_k-\nabla c_k^Tt_k
\end{pmatrix}.
$$
Using $x_k^l\geq\tilde \delta e_l$, there is $M_g>0$ such that $\|g\|\leq M_g$ for all $x_k\in\mathcal{N}$.
Consider the linear system
\begin{equation}\label{dx2}
X_k^aZ_k^a\bar d_k^s=\mu_je_a.
\end{equation}
System \eqref{dx1} can be viewed as a perturbed system for \eqref{dx2} with the coefficient matrix perturbed by
$$
G=X_k^aH_k^{aa}-W_{12}W_{22}^{-1}W_{21}
$$
and the right hand side by
$$
r=-(X_k^a\nabla f_k^a-W_{12}W_{22}^{-1}g).
$$
By perturbation theory for linear system, see, for instance, \cite{stewart1990matrix}, we have
\begin{equation}\label{pls}
\dfrac{\|\tilde d_k^a-\bar d_k^a\|_{\infty}}{\|\bar d_k^a\|_{\infty}}\leq\dfrac{\kappa(X_k^aZ_k^a)}
{1-\kappa(X_k^aZ_k^a)\dfrac{\|G\|_{\infty}}{\|X_k^aZ_k^a\|_{\infty}}}
\left(\dfrac{\|G\|_{\infty}}{\|X_k^aZ_k^a\|_{\infty}}+\frac{\|r\|_{\infty}}{\mu_j}\right),
\end{equation}
where $\kappa(X_k^aZ_k^a)$ refers to the condition number of $X_k^aZ_k^a$, provided that
\begin{equation*}
\|(X_k^aZ_k^a)^{-1}\|_{\infty}\|G\|_{\infty}<1.
\end{equation*}
Using \eqref{resetz}, we have that
\begin{equation}\label{dx3}
\dfrac{\mu_j}{\kappa_\Sigma}I\leq X_k^aZ_k^a\leq\kappa_{\Sigma}\mu_jI,
\end{equation}
which implies that
 \begin{equation}\label{dx5}\dfrac{\mu_j}{\kappa_\Sigma}\leq\|X_k^aZ_k^a\|_{\infty}\leq\kappa_{\Sigma}\mu_j,\  \|(X_k^aZ_k^a)^{-1}\|_{\infty}\leq\kappa_\Sigma/\mu_j,\end{equation} and that the solution $\bar d_k^s$ of \eqref{dx2} satisfies
\begin{equation}\label{d1}
\dfrac{1}{\kappa_\Sigma}\leq\bar d_k^{(i)}\leq\kappa_\Sigma,\ \forall i\in \mathcal{A}.
\end{equation}
Denote
$$
\delta_\mu=\max\{x_k^{(i)}\ |\ i\in\mathcal{A}\}.
$$
From Assumptions \ref{ass} and using $x_k^l\geq\tilde\delta e_l$, there are positive constants $M_H$, $M_{12}$, $M_{21}$, $M_{df}$ and $M_g$ such that
$$
\|G\|_{\infty}\leq\|X_k^a\|_{\infty}\|H_k^{aa}\|_{\infty}+\|W_{12}\|_{\infty}\|W_{22}^{-1}\|_{\infty}\|W_{21}\|_{\infty}\leq M_H\delta_\mu+M_{12}M_WM_{21}\delta_\mu^2
$$
and
$$
\|r\|_{\infty}\leq M_{df}\delta_\mu+ M_{12}M_WM_g\delta_\mu
$$
From \eqref{dx5} and using $\delta_\mu\rightarrow 0$ as $x_k\rightarrow\tilde x$, there is a (smaller) neighborhood $\mathcal{N}$ of $\tilde x$ such that
$$
1-\kappa(X_k^aZ_k^a)\dfrac{\|G\|_{\infty}}{\|X_k^aZ_k^a\|_{\infty}}=1-\|(X_k^aZ_k^a)^{-1}\|\|G\|_{\infty}\geq\frac12.
$$
It also follows from \eqref{dx5} that $\kappa(X_k^aZ_k^a)\leq\kappa_\Sigma^2$. Having all these facts in mind, the right hand side of the inequality
\eqref{pls} tends to 0 as $x_k$ approaching $\tilde x$.
It follow that \begin{equation*}
\begin{split}
\tilde d_k^{(i)}\geq&\bar d_k^{(i)}-|\tilde d_k^{(i)}-\bar d_k^{(i)}|\geq\bar d_k^{(i)}-\|\tilde d_k^a-\bar d_k^a\|_{\infty}\\
\geq&\bar d_k^{(i)}-\|\tilde d_k^a-\bar d_k^a\|_{\infty}\|\bar d_k^a\|_{\infty}\\
\geq&\bar d_k^{(i)}-\dfrac{\kappa(X_k^aZ_k^a)}
{1-\kappa(X_k^aZ_k^a)\dfrac{\|G\|_{\infty}}{\|X_k^aZ_k^a\|_{\infty}}}
\left(\dfrac{\|G\|_{\infty}}{\|X_k^aZ_k^a\|_{\infty}}+\frac{\|r\|_{\infty}}{\mu_j}\right)\kappa_\Sigma\\
\geq&\frac{1}{\kappa_\Sigma}-\dfrac{\kappa(X_k^aZ_k^a)}
{1-\kappa(X_k^aZ_k^a)\dfrac{\|G\|_{\infty}}{\|X_k^aZ_k^a\|_{\infty}}}
\left(\dfrac{\|G\|_{\infty}}{\|X_k^aZ_k^a\|_{\infty}}+\frac{\|r\|_{\infty}}{\mu_j}\right)\kappa_\Sigma\\
\rightarrow&\frac{1}{\kappa_\Sigma}>0,
\end{split}\end{equation*}
where \eqref{pls}, \eqref{d1} are used. Therefore, by shrinking $\mathcal{N}$ if necessary, we have
$
\tilde d_k^a> 0
$
for all $x_k\in\mathcal{N}$, which yields $d_k^a> 0$ because $d_k^a=X_k^a\tilde d_k^a$.
\end{proof}

\begin{theorem}\label{thmd3}
Under Assumptions \ref{ass}, there is a positive constant $\tilde \epsilon>0$, such that for any $x_k\geq\tilde\epsilon e$ for any index $k$.
\end{theorem}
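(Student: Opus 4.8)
The plan is to argue by contradiction and to reduce the global statement to a purely local one about accumulation points. First I would record that every iterate stays strictly positive: the fraction-to-the-boundary rule \eqref{b12} together with $\tau_j\in(0,1)$ gives, for the accepted step $\alpha_k\in(0,\alpha_k^{\max}]$, the componentwise bound $x_{k+1}\geq(1-\tau_j)x_k>0$, so $x_k>0$ for every $k$. Since $\{x_k\}\subseteq\Omega$ with $\Omega$ bounded by (A1), the closure $\overline{\{x_k\}}$ is compact and the continuous map $x\mapsto\min_i x^{(i)}$ attains a minimum on it. Every element of this closure is either an iterate (strictly positive) or an accumulation point, so it suffices to show that no accumulation point $\tilde x$ of $\{x_k\}$ has a vanishing component; the desired $\tilde\epsilon$ is then exactly that positive minimum.

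So suppose, for contradiction, that some accumulation point $\tilde x$ has $\mathcal{A}(\tilde x)\neq\emptyset$, say $i_0\in\mathcal{A}(\tilde x)$. Lemma \ref{lemd1} supplies a neighborhood $\mathcal{N}$ of $\tilde x$ on which $d_k^{(i)}>0$ for every $i\in\mathcal{A}(\tilde x)$; moreover its proof gives the sharper relative estimate $\tilde d_k^{(i)}=d_k^{(i)}/x_k^{(i)}\to 1/\kappa_\Sigma$ (the final chain of inequalities there, built from \eqref{dx3} and \eqref{d1}), so after shrinking $\mathcal{N}$ we may assume $\tilde d_k^{(i)}\geq 1/(2\kappa_\Sigma)$ on $\mathcal{N}$. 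Consequently, whenever $x_k\in\mathcal{N}$ the active components do not decrease across a step: writing $x_{k+1}^{(i)}=x_k^{(i)}(1+\alpha_k\tilde d_k^{(i)})$ for $i\in\mathcal{A}(\tilde x)$ shows $x_{k+1}^{(i)}\geq x_k^{(i)}$. On the other hand, for an arbitrary iterate the rule \eqref{b12} bounds the per-step decrease of any single component from below by the factor $1-\tau_j$.

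The contradiction is then obtained by tracking the single component $x_k^{(i_0)}$ along a subsequence accumulating at $\tilde x$, on which $x_k^{(i_0)}\to 0$. Because $x^{(i_0)}$ can only increase while the iterate sits in $\mathcal{N}$, any tiny value $x_k^{(i_0)}$ must have been present at the moment the iterate last entered $\mathcal{N}$, and the factor-$(1-\tau_j)$ bound then forces the immediately preceding out-of-$\mathcal{N}$ iterate to have an $i_0$-component of the same small order. Combining the monotone growth inside $\mathcal{N}$ with this controlled decay across entries shows that $x_k^{(i_0)}$ cannot in fact be driven to $0$, contradicting the choice of the subsequence. I expect the main obstacle to be precisely this bookkeeping of the excursions of the iterates in and out of $\mathcal{N}$: one must rule out the scenario in which the sequence repeatedly drifts away from $\tilde x$ through the inactive coordinates and re-enters $\mathcal{N}$ with an ever-smaller $i_0$-component, and it is here that the uniform \emph{relative} lower bound $\tilde d_k^{(i)}\geq 1/(2\kappa_\Sigma)$ on the interior directions, rather than mere positivity, does the essential work.
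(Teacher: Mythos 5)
Your reduction to the statement ``no accumulation point has a vanishing component'' is sound, and your local ingredient --- Lemma \ref{lemd1}, giving $d_k^{(i)}>0$ for $i\in\mathcal{A}(\tilde x)$ on a neighborhood $\mathcal{N}$ of $\tilde x$, hence monotone growth of the active components while the iterate stays in $\mathcal{N}$ --- is exactly the one the paper uses. The gap is in the excursion bookkeeping, which you correctly identify as the main obstacle but do not actually close. Working with the single neighborhood $\mathcal{N}$ of the single bad accumulation point $\tilde x$, the only control you have outside $\mathcal{N}$ is the per-step factor $1-\tau_j$ from \eqref{b12}. Nothing in your argument bounds the number of consecutive iterations spent outside $\mathcal{N}$ between two visits: an excursion of length $N$ can shrink $x^{(i_0)}$ by $(1-\tau_j)^N$, and with $N$ unbounded the component can indeed be driven toward $0$ by the time the iterate re-enters $\mathcal{N}$. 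The sharpened relative bound $\tilde d_k^{(i)}\geq 1/(2\kappa_\Sigma)$ cannot do the work you assign to it, because it is only available inside $\mathcal{N}$, precisely where the component is already under control.

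The paper closes exactly this hole with a global compactness step that your sketch omits: it applies Lemma \ref{lemd1} to \emph{every} accumulation point, extracts a finite subcover $\mathcal{N}_1,\dots,\mathcal{N}_q$ of the compact set of all accumulation points, and observes that only finitely many iterates lie outside $\mathcal{N}=\bigcup_{p=1}^q\mathcal{N}_p$. Inside each $\mathcal{N}_p$ the components active at $\tilde x_p$ cannot decrease, the inactive ones are bounded below by $\tilde\delta_p=0.5\min\{\tilde x_p^{(i)}\ |\ i\notin\mathcal{A}(\tilde x_p)\}$, the finitely many exceptional iterates contribute a further positive lower bound, and a one-step induction with $\tilde\epsilon=(1-\tau_j)\min\{\min_p\tilde\delta_p,\ \min_{x_k\notin\mathcal{N},\,i}x_k^{(i)}\}$ then yields $x_k\geq\tilde\epsilon e$ for all $k$. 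If you replace your single neighborhood by this finite cover, your argument goes through and no contradiction framework is needed; without it, the scenario of repeated re-entry into $\mathcal{N}$ with an ever-smaller $i_0$-component is not excluded.
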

\begin{proof}
 By Lemma \ref{lemd1}, for any accumulation point $\tilde x$ of $\{x_k\}$, there is a neighborhood $\tilde{\mathcal{N}}$ of it, such that
 $d_k^{(i)}>0$, $i\in\mathcal{A}$ for all $x_k\in\tilde{\mathcal{N}}$. By compactness, there are finite number of such neighborhoods, denoting with $\mathcal{N}_1$,$\cdots$, $\mathcal{N}_q$, of differen accumulation points, denoting with $\tilde x_1$,$\cdots$, $\tilde x_q$, whose
union covers all the accumulation points. Define $\tilde\delta_p=0.5\min\{\tilde x_p^{(i)}\ |\ i\not\in\mathcal{A}(\tilde x_p) \}$,
$p=1,\cdots,q$. Then $x_k^{(i)}\geq\tilde \delta_p$, for any $i\not\in \mathcal{A}(\tilde x_p)$ and $x_k\in\mathcal{N}_p$. Denote
$\mathcal{N}=\bigcup_{p=1}^q\mathcal{N}_p$. Then there are only finite iterates that are not covered by $\mathcal{N}$.
Let
$$
\tilde\epsilon:=(1-\tau_j)\min\{\min_{p=1,\cdots,q}\tilde\delta_p,\min_{x_k\not\in\mathcal{N},i=1,\cdots,n}x_k^{(i)}\}.
$$
Suppose, without loss of generality, that $x_0\geq\tilde\epsilon e$. Assume that $x_k\geq\tilde\epsilon e$. Let us consider
$x_{k+1}$. If $x_k\not\in\mathcal{N}$, then the lemma is trivially true. Now we consider the case where $x_k\in\mathcal{N}$.
Note that $x_k^{(i+1)}<x_k^{(i)}$ occurs only if $i$ is not active constraints. Then we have that
$$
x_k^{(i+1)}\geq\begin{cases} x_k^{(i)}\geq\tilde\epsilon,& \textrm{the index }i \textrm{ is active},\\
(1-\tau)x_k^{(i)}\geq(1-\tau_j)\min_{p=1,\cdots,q}\tilde\delta_p\geq\tilde\epsilon, &\textrm{otherwise}.
\end{cases}
$$

\end{proof}

From Assumptions \ref{ass}, Theorem \ref{thmd3} and \eqref{resetz}, it is easy to conclude the boundedness of search direction $\{d_k\}$.
From now on, we denote with $M_d$ the upper bound of $\{\|d_k\|\}$. The boundedness of the Lagrange multipliers $\{\lambda_k\}$ follows from
\eqref{topt}.

The remainder of this section gives global convergence results. We shall use the following two index sets
$$
\mathcal{K}_h=\{k\ |\ h_{k+1}^{\max}< h_k^{\max}\},\bar{\mathcal{K}}_h=\{k\ |\ h_{k+1}^{\max}=\kappa_h h_{k}^{\max}\},
$$
which containing $h-$iterations. First, we consider the case where $\mathcal{K}_h$ is infinite.
\begin{lemma}\label{lemfes1}
Suppose that $|\mathcal{K}_h|=+\infty$. Then $\lim_{k\rightarrow\infty}h_k=0$.
\end{lemma}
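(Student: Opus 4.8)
The plan is to prove $h_k^{\max}\to 0$; since Lemma \ref{lemhmax} gives $0\le h_k\le h_k^{\max}$, this immediately yields $\lim_{k\to\infty}h_k=0$. First I would note that $\{h_k^{\max}\}$ is non-increasing and bounded below by $0$ (Lemma \ref{lemhmax}), hence convergent to some $h_*^{\max}\ge 0$, and argue by contradiction assuming $h_*^{\max}>0$. Writing $\mathcal{K}_h=\bar{\mathcal{K}}_h\cup(\mathcal{K}_h\setminus\bar{\mathcal{K}}_h)$ and using $|\mathcal{K}_h|=+\infty$, at least one of these index sets is infinite, and I would treat the two cases separately.

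If $\bar{\mathcal{K}}_h$ is infinite, then along $k\in\bar{\mathcal{K}}_h$ the update \eqref{hmaxnew} reads $h_{k+1}^{\max}=\kappa_h h_k^{\max}$. Passing to the limit along this subsequence, and using that both $h_k^{\max}$ and $h_{k+1}^{\max}$ tend to $h_*^{\max}$, gives $h_*^{\max}=\kappa_h h_*^{\max}$, which contradicts $h_*^{\max}>0$ because $\kappa_h<1$.

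The harder case is $\bar{\mathcal{K}}_h$ finite, so $\mathcal{K}_h\setminus\bar{\mathcal{K}}_h$ is infinite and for all large such $k$ the update is $h_{k+1}^{\max}=\bar\kappa_h h_k+(1-\bar\kappa_h)h_{k+1}$. Using $h_{k+1}\le h_{k+1}^{\max}$ (Lemma \ref{lemhmax}) I would first deduce $\bar\kappa_h h_{k+1}^{\max}\le\bar\kappa_h h_k$, i.e. $h_{k+1}^{\max}\le h_k\le h_k^{\max}$, along this subsequence. Since the two outer terms converge to $h_*^{\max}$, a squeeze gives $h_k\to h_*^{\max}$, and re-inserting this into the update yields $h_{k+1}\to h_*^{\max}$ as well; hence $h_k-h_{k+1}\to 0$ along $\mathcal{K}_h\setminus\bar{\mathcal{K}}_h$.

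To finish I would exhibit a uniform positive lower bound on the per-iteration infeasibility decrease of these $h$-iterations, which is the step I expect to be the main obstacle. From the acceptance rule \eqref{hred}, the convexity bound $\|c_k+\alpha\nabla c_k^Td_k\|\le(1-\alpha)\|c_k\|+\alpha\|c_k+\nabla c_k^Td_k\|$, the splitting $\|c_k+\nabla c_k^Td_k\|\le\|c_k+\nabla c_k^Tv_k\|+\|\nabla c_k^Tt_k\|$, and \eqref{b19}, one gets $h_k-h_{k+1}\ge\rho(1-\kappa_2)\alpha_k(\|c_k\|-\|c_k+\nabla c_k^Tv_k\|)$. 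Bounding the factors below is where the assumptions enter: passing to a convergent subsequence with limit $\tilde x$ we have $h(\tilde x)=h_*^{\max}>0$, so by the linear independence constraint qualification (A4) the Jacobian $\nabla c(\tilde x)$, hence $\nabla c_k$ for all large $k$ in the subsequence, has full rank; then $c_k+\nabla c_k^Tv_k=0$ and the linearized reduction equals $\|c_k\|=h_k\to h_*^{\max}$. For the step length, the halving backtracking together with Lemma \ref{lemhstep} gives $\alpha_k\ge\tfrac12\min\{\alpha_k^{\max},\alpha_k^h\}$, where $\alpha_k^{\max}$ is bounded below through the fraction-to-the-boundary rule \eqref{b12} using $x_k\ge\tilde\epsilon e$ (Theorem \ref{thmd3}) and $\|d_k\|\le M_d$, and $\alpha_k^h=\frac{(1-\rho)(1-\kappa_2)\|c_k\|}{2L_{dc}\|d_k\|^2}$ is bounded below since $\|c_k\|\to h_*^{\max}>0$. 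These bounds force $\liminf_{k\in\mathcal{K}_h\setminus\bar{\mathcal{K}}_h}(h_k-h_{k+1})>0$, contradicting $h_k-h_{k+1}\to 0$. Therefore $h_*^{\max}=0$ and $\lim_{k\to\infty}h_k=0$.
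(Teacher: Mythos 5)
Your proof is correct, and it rests on the same two pillars as the paper's: the per-iteration decrease estimate $h_k-h_{k+1}\geq\rho(1-\kappa_2)\alpha_k(\|c_k\|-\|c_k+\nabla c_k^Tv_k\|)$ extracted from \eqref{hred} and \eqref{b19}, and the structure of the update \eqref{hmaxnew} together with the monotonicity of $\{h_k^{\max}\}$ from Lemma \ref{lemhmax}. The logical organization is genuinely different, though. The paper argues directly: for $k\in\mathcal{K}_h$ it bounds $h_k^{\max}-h_{k+1}^{\max}$ from below by $\min\{(1-\kappa_h)h_k,\,(1-\bar\kappa_h)(h_k-h_{k+1})\}$ and then by a positive multiple of $h_k$ (times $\alpha_k\geq\tfrac12\alpha_k^h$ with $\alpha_k^h$ from \eqref{c5}), so that the telescoping, convergent sum $\sum(h_k^{\max}-h_{k+1}^{\max})$ forces $h_k\to0$ along $\mathcal{K}_h$; only afterwards does it split on whether $\bar{\mathcal{K}}_h$ is infinite to push the conclusion from the subsequence to $h_k^{\max}\to0$ and hence to all $k$. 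You instead run a contradiction from $h_*^{\max}>0$, dispose of the $\bar{\mathcal{K}}_h$-infinite case by the geometric factor $\kappa_h<1$, and in the remaining case use a squeeze to get $h_k-h_{k+1}\to0$ along $\mathcal{K}_h\setminus\bar{\mathcal{K}}_h$ while the decrease estimate keeps it bounded away from zero. The paper's route is more economical (no case analysis is needed to get the subsequential limit, and summability does the work that your uniform lower bound on the decrease does); your route has the advantage that, because $h_k\to h_*^{\max}>0$ along the subsequence, the lower bound on $\alpha_k^h$ is immediate, and you are also more careful than the paper on one point: the backtracking only guarantees $\alpha_k\geq\min\{\alpha_k^{\max},\tfrac12\alpha_k^h\}$, so a lower bound on the fraction-to-the-boundary step $\alpha_k^{\max}$ via \eqref{b12}, Theorem \ref{thmd3} and $\|d_k\|\leq M_d$ is indeed needed, whereas the paper silently uses $\alpha_k\geq\tfrac12\alpha_k^h$.
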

\begin{proof}
Without loss of generality, we assume that $\nabla c_k$ has full rank for all $k\geq 0$. By the updating rule of $h_k^{\max}$ \eqref{hmaxnew}, we have
\begin{eqnarray*}
\lefteqn{h_k^{\max}-h_{k+1}^{\max}\geq\min\{(1-\kappa_h)h_k^{\max},(1-\bar\kappa_h)(h_k-h_{k+1})\}}\\
&&\geq\min\{(1-\kappa_h)h_k,(1-\kappa_2)\rho\alpha_kh_k\},
\end{eqnarray*}
where Lemma \ref{lemhmax}, \eqref{b9}, \eqref{b19} and \eqref{hred} are used. Since $\alpha_k\geq0.5\alpha_k^h$, where $\alpha_k^h$ given by \eqref{c5},
the previous inequalities give
\begin{equation*}
{h_k^{\max}-h_{k+1}^{\max}\geq\min\left\{(1-\kappa_h)h_k,\dfrac{(1-\kappa_2)^2(1-\rho)\rho h_k}{2L_{dc}M_d^2}\right\}},
\end{equation*}
where the upper bound of $\{\|d_k\|\}$ are used. From the non-increasing property of the sequence $\{h_k^{\max}\}$ (see Lemma \ref{lemhmax}), it follows from
the above inequality that \begin{equation}\label{dx9}\lim_{k\in\mathcal{K}_h}h_k=0.\end{equation}

Consider the cardinal number of $\bar{\mathcal{K}}_h$. If $|\bar{\mathcal{K}}_h|$, then it is a direct consequence of the definition of $\bar{\mathcal{K}}_h$ that $\lim_{k\rightarrow\infty}h_k^{\max}=0$. Otherwise, there is an index $k_1>0$ such that
$$
h_k^{\max}=\kappa_h h_k+(1-\kappa_h)h_{k+1}\leq h_k
$$
for all $k\geq k_1$ and $k\in\mathcal{K}_h$. Then we have by this inequality and \eqref{dx9}
$$
\lim_{k\in\mathcal{K}_h}h_k^{\max}=0.
$$
Using non-increasing property of $\{h_k^{\max}\}$, we have $\lim_{k\rightarrow\infty}h_k^{\max}=0$, which implies, by Lemma \ref{lemhmax}, that
$\lim_{k\rightarrow\infty}h_k=0$.
\end{proof}

\begin{lemma}\label{lemt1}
Suppose that $|\mathcal{K}_h|=+\infty$. Then $\lim_{k\in\mathcal{K}_h}t_k=0$.
\end{lemma}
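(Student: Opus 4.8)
The plan is to combine the feasibility result of Lemma~\ref{lemfes1} with the uniform size estimate on the normal step to force $t_k\to 0$ along $\mathcal{K}_h$. Since $|\mathcal{K}_h|=+\infty$, Lemma~\ref{lemfes1} gives $\lim_{k\to\infty}h_k=0$, so in particular $\lim_{k\in\mathcal{K}_h}h_k=0$. The key observation is that every $k\in\mathcal{K}_h$ is an $h$-iteration, so by the switch logic condition \eqref{b18} fails to hold with the $f$-iteration sign; more precisely the tangential step for an $h$-iterate satisfies the pair \eqref{b18}--\eqref{b19}. I would first extract from \eqref{b19} that
\begin{equation*}
\|\nabla c_k^T t_k\|\leq\kappa_2\bigl(h_k-\|c_k+\nabla c_k^T v_k\|\bigr)\leq\kappa_2 h_k,
\end{equation*}
which already tends to $0$ on $\mathcal{K}_h$. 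This controls the component of $t_k$ in $\mathrm{Range}(\nabla c_k)$.

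The main work is to control the component of $t_k$ lying in (or near) $\mathrm{Null}(\nabla c_k^T)$, since \eqref{b19} alone only bounds $\nabla c_k^T t_k$. For this I would return to the first-order condition \eqref{topt}, namely
\begin{equation*}
\Bigl(\tilde W_k+\tfrac1{\nu_k}\nabla c_k\nabla c_k^T+\zeta_k I\Bigr)t_k=-(\nabla\varphi_k^{\mu}+\tilde W_k v_k).
\end{equation*}
The difficulty is that the right-hand side need not vanish, so I cannot directly conclude $t_k\to 0$ from the positive definiteness bound \eqref{pdef}. Instead the plan is to argue that, for $h$-iterations in $\mathcal{K}_h$, the objective-reduction quantity $-(\nabla\varphi_k^{\mu})^T(v_k+t_k)$ is small by \eqref{b18}, and then use the quadratic model together with the lower curvature bound $b_1\|t_k\|^2$ to squeeze $\|t_k\|$. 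Concretely, from the optimality of $t_k$ for \eqref{b11} one has $(\nabla\varphi_k^{\mu}+\tilde W_k v_k)^T t_k+\tfrac12 t_k^T(\cdots)t_k\leq 0$, hence
\begin{equation*}
b_1\|t_k\|^2\leq t_k^T\Bigl(\tilde W_k+\tfrac1{\nu_k}\nabla c_k\nabla c_k^T+\zeta_k I\Bigr)t_k\leq -2(\nabla\varphi_k^{\mu}+\tilde W_k v_k)^T t_k,
\end{equation*}
and I would relate the right-hand side to $-(\nabla\varphi_k^{\mu})^T(v_k+t_k)$ (which is $<\sigma_1 h_k^{\sigma_2}$ by \eqref{b18}) plus correction terms involving $v_k$ and $\tilde W_k v_k$.

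The crucial link will be that $v_k\to 0$ along $\mathcal{K}_h$: from Lemma~\ref{lemfes1} we have $h_k=\|c_k\|\to 0$, and since $\nabla c_k$ has full rank on the relevant neighborhoods (Assumption~(A4)) with $v_k=-\nabla c_k(\nabla c_k^T\nabla c_k)^{-1}c_k$, the uniform bounds give $\|v_k\|\leq C\|c_k\|\to 0$. With $v_k\to 0$, the term $\tilde W_k v_k$ vanishes (using boundedness of $\{B_k\}$ and the reset \eqref{resetz}), and $-(\nabla\varphi_k^{\mu})^T v_k\to 0$; combined with $-(\nabla\varphi_k^{\mu})^T(v_k+t_k)<\sigma_1 h_k^{\sigma_2}\to 0$ this forces $-(\nabla\varphi_k^{\mu})^T t_k\to 0$, and then the displayed quadratic inequality yields $b_1\|t_k\|^2\to 0$. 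I expect the main obstacle to be bookkeeping the sign and the cross terms when passing between $-(\nabla\varphi_k^{\mu}+\tilde W_k v_k)^T t_k$ and the switch quantity $-(\nabla\varphi_k^{\mu})^T(v_k+t_k)$ of \eqref{b18}; once $v_k\to 0$ is established these differ by lower-order terms and the estimate \eqref{tup} keeps $\|t_k\|$ bounded, so the squeeze goes through and $\lim_{k\in\mathcal{K}_h}t_k=0$ follows.
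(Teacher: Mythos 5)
Your proposal is correct and takes essentially the same route as the paper: deduce $(\nabla\varphi_k^{\mu_j})^Tt_k\to 0$ from \eqref{b18} together with $h_k\to 0$ and $v_k\to 0$, then squeeze $\|t_k\|$ using the optimality of $t_k$ for \eqref{b11} and the curvature bound \eqref{pdef}. Your explicit handling of $v_k\to 0$ and of the one-sidedness of \eqref{b18} is in fact slightly more careful than the paper's own write-up, while the preliminary bound on $\|\nabla c_k^Tt_k\|$ via \eqref{b19} is harmless but not needed.
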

\begin{proof}
Since $\lim_{k\rightarrow\mathcal{K}_h}h_k=0$ and \eqref{b18} holds for all $k\in\mathcal{K}_h$, we have
\begin{equation*}
\|(\nabla\varphi_k^{\mu_j})^Tt_k\|-\|(\nabla\varphi_k^{\mu_j})^Tv_k\|\leq\sigma_1h_k^{\sigma_2},
\end{equation*}
which implies
$\lim_{k\in\mathcal{K}_h}\|(\nabla\varphi_k^{\mu_j})^Tt_k\|=0.$ By the optimality of $t_k$ for \eqref{b11} and using \eqref{pdef}, we have
$$
-(\|(\nabla\varphi_k^{\mu_j})^Tt_k\|+\|W_kt_k\|\|v_k\|)+\frac12b_1\|t_k\|^2\leq 0.
$$
It follows that
\begin{equation}\label{dx8}
\frac12b_1\|t_k\|^2\leq(\|(\nabla\varphi_k^{\mu_j})^Tt_k\|+\|W_kt_k\|\|v_k\|).
\end{equation}
Taking limits on both sides, we get $\lim_{k\in\mathcal{K}_h}t_k=0$.
\end{proof}

Next, we consider the case where $|\mathcal{K}_h|$ is a finite set.

\begin{lemma}\label{lemfes2}
Suppose that $|\mathcal{K}_h|<+\infty$. Then $\lim_{k\rightarrow+\infty}h_k=0$.
\end{lemma}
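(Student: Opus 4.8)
The plan is to exploit that $|\mathcal{K}_h|<+\infty$ forces the non-increasing sequence $\{h_k^{\max}\}$ (Lemma \ref{lemhmax}) to be eventually constant, say $h_k^{\max}=\hat h$ for all $k\ge k_0$, and then to split on the value of $\hat h$. If $\hat h=0$, the conclusion is immediate from $0\le h_k\le h_k^{\max}\to 0$ by Lemma \ref{lemhmax}. So the substantive case is $\hat h>0$, and here I would first show that only finitely many $h$-iterations can occur. If $k\ge k_0$ is an $h$-iteration with $k\notin\mathcal{K}_h$, then $h_{k+1}^{\max}=\hat h$; since $\kappa_h\hat h<\hat h$ and, by Lemma \ref{lemhmax}, both $h_k\le\hat h$ and $h_{k+1}\le\hat h$, the update rule \eqref{hmaxnew} can give $\hat h=\bar\kappa_h h_k+(1-\bar\kappa_h)h_{k+1}$ only when $h_k=h_{k+1}=\hat h$. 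But $h_k=\hat h>0$ forces $v_k\neq 0$ (otherwise Algorithm 3 stops at Step 2), so the normal step strictly reduces the linearized infeasibility and \eqref{hred} then yields $h_{k+1}<\hat h$, a contradiction. Consequently there is an index $k_1\ge k_0$ beyond which every iteration is an $f$-iteration.

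On these $f$-iterations I would extract a summable decrease of the barrier function. Combining the Armijo condition \eqref{fred} with the $f$-iteration test \eqref{b16} and $d_k=v_k+t_k$ gives
\begin{equation*}
\varphi_k^{\mu_j}-\varphi^{\mu_j}(x_{k+1})\ge-\rho\alpha_k(\nabla\varphi_k^{\mu_j})^Td_k\ge\rho\sigma_1\alpha_k h_k^{\sigma_2}.
\end{equation*}
Since $\Omega$ is bounded and, by Theorem \ref{thmd3}, $x_k^{(i)}\ge\tilde\epsilon>0$ for all $i$ and $k$, the barrier term $-\mu_j\sum_i\ln x_k^{(i)}$, and hence $\varphi^{\mu_j}$, is bounded below along the iterates. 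Summing the displayed inequality over $k\ge k_1$ therefore yields $\sum_{k\ge k_1}\alpha_k h_k^{\sigma_2}<+\infty$, so that $\alpha_k h_k^{\sigma_2}\to 0$.

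Finally I would argue by contradiction. Suppose $h_k\not\to 0$; then along some subsequence $\mathcal{S}$ one has $h_k\ge\epsilon>0$, whence $\alpha_k\to 0$ on $\mathcal{S}$ because $\alpha_k h_k^{\sigma_2}\to 0$. I would contradict this by bounding $\alpha_k$ below. The backtracking loop together with Lemma \ref{lemfstep} gives $\alpha_k\ge\tfrac12\min\{\alpha_k^{\max},\alpha_k^f\}$; the fraction-to-boundary rule \eqref{b12} with $x_k^{(i)}\ge\tilde\epsilon$ and $\|d_k\|\le M_d$ bounds $\alpha_k^{\max}$ below by a positive constant, and both terms in the expression \eqref{alphaf} for $\alpha_k^f$ are bounded below, since $-(\nabla\varphi_k^{\mu_j})^Td_k\ge\sigma_1\epsilon^{\sigma_2}$, $\|d_k\|\le M_d$, $x_k^{(i)}\ge\tilde\epsilon$, and (reducing to full-rank $\nabla c_k$ as in Lemma \ref{lemfes1}, where $c_k+\nabla c_k^Tv_k=0$) $h_k^{\max}-\|c_k+\nabla c_k^Tv_k\|=\hat h\ge\epsilon$. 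This contradicts $\alpha_k\to 0$ and proves $h_k\to 0$.

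The main obstacle I anticipate is the uniform lower bound on $\alpha_k$ along $\mathcal{S}$, specifically keeping $h_k^{\max}-\|c_k+\nabla c_k^Tv_k\|$ away from zero so that the second term of \eqref{alphaf} does not collapse; I would dispose of the rank-deficient normal step exactly as in Lemma \ref{lemfes1}, passing through the linear independence constraint qualification (assumption (A4)) and compactness to a sub-subsequence on which $\nabla c_k$ has full rank and hence $c_k+\nabla c_k^Tv_k=0$. A secondary technical point is justifying that past $k_1$ all iterations are $f$-iterations, which the argument above settles but which is indispensable for the summation step.
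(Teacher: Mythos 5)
Your proposal is correct and follows essentially the same route as the paper: eventually every iteration is an $f$-iteration, the Armijo condition \eqref{fred} combined with \eqref{b16} gives a decrease $\varphi_k^{\mu_j}-\varphi_{k+1}^{\mu_j}\geq\rho\sigma_1\alpha_k h_k^{\sigma_2}$, and the step-size lower bound from Lemma \ref{lemfstep} (with the full-rank reduction so that $c_k+\nabla c_k^Tv_k=0$) together with the boundedness and monotonicity of $\{\varphi_k^{\mu_j}\}$ forces $h_k\rightarrow 0$; whether one substitutes the bound on $\alpha_k$ before summing (as the paper does) or sums first and argues by contradiction (as you do) is immaterial. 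Your explicit verification that only finitely many $h$-iterations can occur when $\hat h>0$ supplies a detail the paper merely asserts, which is a welcome addition rather than a divergence.
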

\begin{proof}
Finiteness of the set $\mathcal{K}_h$ implies, using the updating rule of $h_k^{\max}$, that there is an index $k_0>0$ such that for all $k\geq k_0$,
$x_k$ is an $f$-iterate, i.e., the inequality \eqref{b16} holds. Then following the acceptance rule for $f-$steps, we have
$$
\varphi_k^{\mu_j}-\varphi_{k+1}^{\mu_j}\geq-\rho\alpha_k(\nabla_k^{\mu_j})^Td_k.
$$
Using this inequality and \eqref{b16} gives
\begin{equation}\label{dx6}
\varphi_k^{\mu_j}-\varphi_{k+1}^{\mu_j}\geq\rho\sigma_1\alpha_kh_k^{\sigma_2}.
\end{equation}
Without loss of generality, we still assume that $\nabla c_k$ has full rank for all $k\geq 0$. By Lemma \eqref{lemfstep}, we have $\alpha_k\geq0.5\alpha_k^f$, where $\alpha_k^f$ is given in \eqref{alphaf}. Noting that
$c_k+\nabla c_kv_k=0$ and the algorithm does not update $h_k^{\max}$ for $k\geq k_0$, \eqref{dx6} yields
\begin{equation}%\label{dx7}
\varphi_k^{\mu_j}-\varphi_{k+1}^{\mu_j}\geq\frac12\rho\sigma_1\min\left\{\frac{(1-\rho)\sigma_1h_k^{\sigma_2}}
{\left(L_{df}+\frac{\mu_j}{(1-\tau_j)\tilde\epsilon^2}\right)M_d^2},
\frac{(1-\kappa_1)(h_{k_0}^{\max})}{L_{dc}M_d^2}\right\}h_k^{\sigma_2}
\end{equation}
for all $k\geq k_0$, where Theorem \ref{thmd3}, \eqref{b16} and the upper bound of $\{\|d_k\|\}$ are used. Note that Theorem \ref{thmd3} ensures the boundedness of $\{\varphi_k^{\mu_j}\}$ and the acceptance criteria for $f-$steps implies the non-increasing property for $\{\{\varphi_k^{\mu_j}\}\}_{k\geq k_0}$. It
follows from \eqref{dx6} that $\lim_{k\rightarrow\infty}h_k=0$.
\end{proof}

\begin{lemma}\label{lemt2}
Suppose that $|\mathcal{K}_h|<+\infty$. Then $\lim_{k\rightarrow+\infty}t_k=0$.
\end{lemma}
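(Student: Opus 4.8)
The plan is to exploit the fact that, when $|\mathcal{K}_h|<+\infty$, the iteration is eventually driven purely by the $f$-reduction condition, and to extract $t_k\to 0$ from the summability of the objective decrease rather than from \eqref{b18} as was done in Lemma \ref{lemt1}. First I would record the consequences of the hypothesis: by Lemma \ref{lemfes2} we have $h_k\to 0$, and (as in its proof) there is an index $k_0$ such that every $x_k$ with $k\geq k_0$ is an $f$-iterate and $h_k^{\max}=h_{k_0}^{\max}$ is constant. Assuming, as before, that $\nabla c_k$ has full rank, the normal step satisfies $c_k+\nabla c_k^Tv_k=0$, so $\|\nabla c_k^Tv_k\|=h_k\to 0$; since the smallest singular value of $\nabla c_k$ is bounded below uniformly (by (A4) together with the compactness argument already used in Theorem \ref{thmd3}), this yields $v_k\to 0$. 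I would also note that Theorem \ref{thmd3}, (A3) and \eqref{resetz} make $\|\nabla\varphi_k^{\mu_j}\|$ and $\|\tilde W_k\|$ uniformly bounded, and that \eqref{tup} then makes $\{\|t_k\|\}$ bounded.

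Next, writing $\beta_k:=-(\nabla\varphi_k^{\mu_j})^Td_k>0$ (positivity as established in Lemma \ref{lemfstep}), the $f$-step acceptance rule \eqref{fred} gives $\varphi_k^{\mu_j}-\varphi_{k+1}^{\mu_j}\geq\rho\alpha_k\beta_k$ for all $k\geq k_0$. Since $\{\varphi_k^{\mu_j}\}_{k\geq k_0}$ is nonincreasing (by the acceptance criterion) and bounded below (Theorem \ref{thmd3} bounds the barrier term and $f$ is continuous on $\Omega$), the telescoping sum $\sum_{k\geq k_0}\alpha_k\beta_k$ converges, whence $\alpha_k\beta_k\to 0$.

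The crux is then to convert $\alpha_k\beta_k\to 0$ into $\beta_k\to 0$, which is where the line-search lower bound is needed. Using Lemma \ref{lemfstep} one has $\alpha_k\geq\tfrac12\alpha_k^f$ with $\alpha_k^f$ from \eqref{alphaf}; in the full-rank regime $h_k^{\max}-\|c_k+\nabla c_k^Tv_k\|=h_{k_0}^{\max}$, and together with $x_k\geq\tilde\epsilon e$ (Theorem \ref{thmd3}) and $\|d_k\|\leq M_d$ this shows $\alpha_k^f\geq\min\{c_1\beta_k,c_2\}$ for fixed constants $c_1,c_2>0$ (and $\alpha_k^{\max}$ is likewise bounded below, by $\tau_j\tilde\epsilon/M_d$). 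Hence $\alpha_k\beta_k\geq\tfrac12\min\{c_1\beta_k^2,c_2\beta_k\}\to 0$, and a standard subsequence argument (if $\beta_{k_i}\geq\varepsilon>0$ along a subsequence, then $\min\{c_1\beta_{k_i}^2,c_2\beta_{k_i}\}$ stays bounded away from $0$, a contradiction) forces $\beta_k\to 0$. I expect this step to be the main obstacle, chiefly in pinning down that the two ingredients of $\alpha_k^f$ really are bounded below by a constant times $\beta_k$ and by a positive constant, respectively.

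Finally I would close the loop algebraically. Taking the inner product of the optimality condition \eqref{topt} with $t_k$, rearranging, and using $-(\nabla\varphi_k^{\mu_j})^Tt_k=\beta_k+(\nabla\varphi_k^{\mu_j})^Tv_k$ together with the positive-definiteness bound \eqref{pdef}, I obtain $b_1\|t_k\|^2\leq\beta_k+|(\nabla\varphi_k^{\mu_j})^Tv_k|+|v_k^T\tilde W_kt_k|\leq\beta_k+\|\nabla\varphi_k^{\mu_j}\|\,\|v_k\|+\|\tilde W_k\|\,\|v_k\|\,\|t_k\|$. The right-hand side tends to $0$ because $\beta_k\to 0$, $v_k\to 0$, and $\|\nabla\varphi_k^{\mu_j}\|$, $\|\tilde W_k\|$, $\|t_k\|$ are bounded; therefore $\|t_k\|\to 0$, which is the claim.
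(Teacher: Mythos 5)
Your proposal is correct and follows essentially the same route as the paper: telescope the $f$-step decrease $\varphi_k^{\mu_j}-\varphi_{k+1}^{\mu_j}\geq\rho\alpha_k\beta_k$, use the lower bound $\alpha_k\geq\tfrac12\alpha_k^f$ from Lemma \ref{lemfstep} to force $\beta_k=-(\nabla\varphi_k^{\mu_j})^Td_k\rightarrow 0$, then combine $h_k\rightarrow 0$ (hence $v_k\rightarrow 0$) with the optimality inequality \eqref{dx8} for the quasi-tangential subproblem to conclude $t_k\rightarrow 0$. Your write-up is in fact somewhat more careful than the paper's (which carries over a spurious factor $\sigma_1 h_k^{\sigma_2}$ from the preceding lemma), but the argument is the same.
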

\begin{proof}
By similar arguments as the proof of the previous lemma, we have
\begin{equation*}
\varphi_k^{\mu_j}-\varphi_{k+1}^{\mu_j}\geq-\frac12\rho\sigma_1\min\left\{\frac{-(1-\rho)(\nabla\varphi_k^{\mu_j})^Td_k}
{\left(L_{df}+\frac{\mu_j}{(1-\tau_j)\tilde\epsilon^2}\right)M_d^2},
\frac{(1-\kappa_1)(h_{k_0}^{\max})}{L_{dc}M_d^2}\right\}h_k^{\sigma_2}(\nabla\varphi_k^{\mu_j})^Td_k,
\end{equation*}
for all $k\geq k_0$. It follows that $\lim_{k\rightarrow\infty}\|(\nabla\varphi_k^{\mu_j})^Td_k\|=0$, where the non-increasing property and lower
boundedness of $\{\varphi_k^{\mu_j}\}$ are used. Since $\lim_{k\rightarrow\infty}h_k=0$, we get $\lim_{k\rightarrow\infty}\|(\nabla\varphi_k^{\mu_j})^Tt_k\|=0$. Therefore, by \eqref{dx8}, we have $\lim_{k\rightarrow\infty}t_k=0.$
\end{proof}

To sum up the above four lemmas, i.e., Lemmas \ref{lemfes1}-\ref{lemt2}, we get our global convergence theorem.

\begin{theorem}\label{thminner}
Under Assumptions 4.1, suppose that Algorithm 3 does not terminate finitely then
\begin{enumerate}[(1)]
\item if $|\mathcal{K}_h|=\infty$, then any accumulation point of $\{x_k\}_{k\in{\mathcal{K}}_h}$ is a KKT point for \eqref{b1}.

\item if $|\mathcal{K}_h|<\infty$, then any accumulation point of $\{x_k\}$ is a KKT point for \eqref{b1}.
\end{enumerate}
\end{theorem}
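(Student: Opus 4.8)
The plan is to verify, at any accumulation point $\tilde x$ of the relevant subsequence, the two KKT conditions of the barrier problem \eqref{b1}: feasibility $c(\tilde x)=0$ and stationarity $\nabla\varphi^{\mu_j}(\tilde x)+\nabla c(\tilde x)\tilde\lambda=0$ for some multiplier $\tilde\lambda$. Feasibility is immediate from the previous lemmas: in case (1), Lemma \ref{lemfes1} gives $\lim_{k\to\infty}h_k=0$, and in case (2), Lemma \ref{lemfes2} gives the same, so along any convergent subsequence $c(\tilde x)=\lim c_k=0$. The real work is therefore the stationarity condition, which I would extract from the first-order optimality condition \eqref{topt} of the quasi-tangential subproblem combined with the multiplier estimate \eqref{elam}.

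First I would fix the convergent subsequence: in case (1), restrict to indices $k\in\mathcal{K}_h$ converging to $\tilde x$, so that Lemma \ref{lemt1} yields $t_k\to0$; in case (2), take the full sequence and invoke Lemma \ref{lemt2}, again giving $t_k\to0$. By Assumption (A4), $\nabla c_k$ has full rank in a neighborhood $\mathcal{N}$ of $\tilde x$, so for $x_k\in\mathcal{N}$ the normal step solves \eqref{b9} exactly with $c_k+\nabla c_k^Tv_k=0$; together with $h_k\to0$ this forces $\|v_k\|\le\kappa\|c_k\|\to0$, i.e. $v_k\to0$. Moreover, as argued inside Lemma \ref{lemd1}, near $\tilde x$ the hypotheses of Lemma \ref{thmnu} hold, so $\tilde W_k+\frac1{\nu_k}\nabla c_k\nabla c_k^T$ becomes positive definite before $\nu_k$ is driven below $\nu_k^{\min}$, whence $\zeta_k=0$ for all $x_k\in\mathcal{N}$.

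With $\zeta_k=0$, substituting \eqref{elam} into \eqref{topt} gives
\begin{equation*}
\nabla\varphi_k^{\mu_j}+\nabla c_k\lambda_{k+1}=-\tilde W_kv_k-\tilde W_kt_k .
\end{equation*}
Since $\nabla c_k$ has full column rank on $\mathcal{N}$, this can be solved as $\lambda_{k+1}=-(\nabla c_k^T\nabla c_k)^{-1}\nabla c_k^T\bigl(\nabla\varphi_k^{\mu_j}+\tilde W_k(v_k+t_k)\bigr)$, which is bounded because Theorem \ref{thmd3} keeps $x_k$ bounded away from $0$ (controlling $\nabla\varphi_k^{\mu_j}$ and, via \eqref{resetz}, the term $X_k^{-1}Z_k$ in $\tilde W_k$) while $\{H_k\}$ is bounded by (A3). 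I would then pass to a further subsequence along which $\lambda_{k+1}\to\tilde\lambda$. Letting $k\to\infty$ in the displayed identity and using $v_k\to0$, $t_k\to0$, the boundedness of $\tilde W_k$, and continuity, I obtain $\nabla\varphi^{\mu_j}(\tilde x)+\nabla c(\tilde x)\tilde\lambda=0$; combined with $c(\tilde x)=0$ this is precisely the KKT system for \eqref{b1}, establishing both (1) and (2).

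I expect the main obstacle to be controlling the multiplier estimate $\lambda_{k+1}=\frac1{\nu_k}\nabla c_k^Tt_k$: because $\nu_k$ may be small, its boundedness cannot be read off directly from $t_k\to0$, and one must instead use the full-rank rearrangement of \eqref{topt} above. The accompanying verifications that $\zeta_k=0$ and $v_k\to0$ throughout $\mathcal{N}$—both resting on the LICQ of Assumption (A4) through Lemmas \ref{thmnu} and \ref{lemd1}—are the remaining points that require care.
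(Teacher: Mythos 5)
Your proposal is correct and follows essentially the same route as the paper, which simply states that the theorem follows by "summing up" Lemmas \ref{lemfes1}--\ref{lemt2}; you supply precisely the details the paper leaves implicit (feasibility from Lemmas \ref{lemfes1}/\ref{lemfes2}, stationarity from \eqref{topt} with \eqref{elam}, and boundedness of $\lambda_{k+1}$ via the full-rank rearrangement, which matches the paper's own remark that boundedness of $\{\lambda_k\}$ follows from \eqref{topt}). No gaps.
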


Theorem \ref{thminner} indicates that the inner loop of Algorithm 1, i.e., Step 2 will terminate finitely under Assumptions 4.1. Hence, by the mechanism of the
algorithm, we get the global convergence of the whole algorithm.

\begin{theorem}
Under Assumptions 4.1, if Algorithm 1 does not terminate finitely, then at least one of the accumulation points of the iterate sequence is a KKT point for
problem \eqref{a1}.
\end{theorem}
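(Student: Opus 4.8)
The plan is to treat this as the standard outer-loop convergence statement for a barrier method, using the inner-loop result Theorem \ref{thminner} as a black box. First I would observe that if Algorithm 1 does not terminate finitely, then Step 1 never triggers, so we obtain an infinite sequence of outer indices $j=0,1,2,\dots$ with barrier parameters satisfying $\mu_{j+1}\in(0,\mu_j)$ and, by the update rule of Section 5, $\mu_j\to 0$. For each $j$, Theorem \ref{thminner} guarantees that the inner loop (Algorithm 3) terminates finitely, returning a triple $(x_{j+1},\lambda_{j+1},z_{j+1})$ with $(x_{j+1},z_{j+1})>0$ and $E_{\mu_j}(x_{j+1},\lambda_{j+1},z_{j+1})\le\kappa_\epsilon\mu_j$. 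Unpacking the definition of $E_{\mu_j}$, this single inequality splits into the three residual bounds $\|c(x_{j+1})\|\le\kappa_\epsilon\mu_j$, $\|X_{j+1}z_{j+1}-\mu_j e\|\le s_c\kappa_\epsilon\mu_j$, and $\|\nabla f(x_{j+1})+\nabla c(x_{j+1})\lambda_{j+1}-z_{j+1}\|\le s_d\kappa_\epsilon\mu_j$, which are precisely the perturbed KKT residuals of \eqref{pdkkt}.

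Next I would extract a limit point. By Assumption (A1) the iterates lie in the bounded set $\Omega$, so $\{x_{j+1}\}$ admits an accumulation point $\tilde x$; pass to a subsequence $\mathcal{J}$ with $x_{j+1}\to\tilde x$. The crucial auxiliary step is to show the dual sequences $\{\lambda_{j+1}\}$ and $\{z_{j+1}\}$ are bounded along $\mathcal{J}$, so that (after a further subsequence) $\lambda_{j+1}\to\tilde\lambda$ and $z_{j+1}\to\tilde z$. For $z$ I would use the reset rule \eqref{resetz}, which keeps $z_{j+1}>0$ and ties each component to $\mu_j/x^{(i)}$; combined with the complementarity residual $X_{j+1}z_{j+1}\to 0$ (since $\mu_j\to 0$), this controls $z_{j+1}$. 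For $\lambda$ I would invoke the LICQ at $\tilde x$ (Assumption (A4)): in a neighborhood of $\tilde x$ the Jacobian $\nabla c$ has full rank, so the stationarity residual lets one solve for $\lambda_{j+1}=(\nabla c_{j+1}^T\nabla c_{j+1})^{-1}\nabla c_{j+1}^T(z_{j+1}-\nabla f_{j+1}+r_{j+1})$ with $r_{j+1}\to 0$, giving boundedness of $\lambda_{j+1}$ from boundedness of $z_{j+1}$ and of $\nabla f_{j+1}$ (Assumption (A2)). Boundedness of the duals in turn keeps the scaling factors $s_d,s_c$ bounded, which is what makes the scaled residual bounds above translate into genuine convergence of the unscaled residuals to zero.

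Finally I would pass to the limit along $\mathcal{J}$ in the three residual bounds, using $\mu_j\to 0$ and the boundedness of $s_d,s_c$: this yields $c(\tilde x)=0$ (primal feasibility of the equalities), $\tilde X\tilde z=0$ (complementarity), and $\nabla f(\tilde x)+\nabla c(\tilde x)\tilde\lambda-\tilde z=0$ (stationarity of the Lagrangian). The strict positivity maintained throughout by the fraction-to-the-boundary rule \eqref{b12} and the reset \eqref{resetz} gives $\tilde x\ge 0$ and $\tilde z\ge 0$ in the limit. Together these are exactly the KKT conditions of \eqref{a1}, equivalently $E_0(\tilde x,\tilde\lambda,\tilde z)=0$, so $\tilde x$ is a KKT point and the theorem follows.

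I expect the boundedness of the dual sequences, together with the attendant control of the scaling factors $s_d$ and $s_c$, to be the main obstacle. The feasibility and complementarity limits are immediate once $\mu_j\to 0$, but the stationarity limit is meaningful only if the scaled residual bound does not hide a blow-up of $\|\lambda_{j+1}\|_1+\|z_{j+1}\|_1$; securing uniform multiplier bounds via the LICQ at accumulation points and the reset safeguard \eqref{resetz} is therefore the delicate part of the argument, whereas the remaining limits are routine.
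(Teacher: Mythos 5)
Your overall plan is exactly what the paper intends: the paper in fact offers no written proof of this theorem, only the remark that Theorem \ref{thminner} makes the inner loop finite and that the result then follows ``by the mechanism of the algorithm,'' and your outline (inner termination with $E_{\mu_j}(x_{j+1},\lambda_{j+1},z_{j+1})\le\kappa_\epsilon\mu_j$, $\mu_j\to0$, extraction of an accumulation point, passage to the limit in the perturbed KKT residuals of \eqref{pdkkt}) is the standard way to fill that in. The one link in your chain that does not hold as stated is the boundedness of $\{z_{j+1}\}$: the reset \eqref{resetz} only gives $z_{j+1}^{(i)}\le\kappa_\sigma\mu_j/x_{j+1}^{(i)}$, which controls $z_{j+1}^{(i)}$ (and sends it to $0$) for components with $\tilde x^{(i)}>0$, but for active components $i\in\mathcal A(\tilde x)$ both $\mu_j$ and $x_{j+1}^{(i)}$ tend to $0$ and the bound is indeterminate, so the reset plus $X_{j+1}z_{j+1}\to0$ does not by itself control $z_{j+1}$ there. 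Since your bound on $\lambda_{j+1}$ is derived \emph{from} the boundedness of $z_{j+1}$, the whole dual-boundedness step rests on this unproved link. The standard repair is to bound $(\lambda_{j+1},z^a_{j+1})$ jointly: suppose $\|(\lambda_{j+1},z_{j+1})\|\to\infty$ along $\mathcal J$, normalize, note that the inactive components of the normalized $z$ vanish in the limit (by the argument you already have), and pass to the limit in the scaled stationarity residual to get $\nabla c(\tilde x)\hat\lambda-\hat z=0$ with $\hat z$ supported on $\mathcal A(\tilde x)$; LICQ at $\tilde x$ (Assumption (A4), which gives linear independence of $\nabla c(\tilde x)$ together with the active bound gradients) then forces $(\hat\lambda,\hat z)=0$, a contradiction. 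With that substitution the rest of your limit passage, including the control of $s_d,s_c$ and the conclusion $E_0(\tilde x,\tilde\lambda,\tilde z)=0$, goes through.
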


%\bibliographystyle{plain}
%\bibliography{MyRef}

\end{document}